\documentclass[12pt,a4paper, oneside, bold,secthm,seceqn,amsthm,ussrhead,reqno]{article}
\usepackage[utf8]{inputenc}
\usepackage[english]{babel}
\usepackage[symbol]{footmisc}
\usepackage{amssymb,amsmath,amsthm,amsfonts,xcolor,enumerate,hyperref, comment,longtable, cleveref}
\usepackage{verbatim}
\usepackage{times}
\usepackage{cite}
\usepackage{pdflscape}
\usepackage{ulem}
\usepackage[mathcal]{euscript}
\usepackage{tikz}
\usepackage{hyperref}
\usepackage{cancel}
\usepackage{stmaryrd}
 
\usepackage{amsfonts}
\usepackage{amssymb}
\usepackage{times}
\usepackage{xcolor}

\usepackage{float}
\usepackage{tasks}
\usepackage{array}
%\usepackage{refcheck}

%\DeclareMathAlphabet{\mathcal}{OMS}{cmsy}{m}{n}

%%%%%%%%%%%%%%%%%%%%%%%%%%%%%%
% Paquetes extra

\usepackage{cite}
\usepackage{hyperref}

%\DeclareMathAlphabet{\mathcal}{OMS}{cmsy}{m}{n}
 \usepackage{fancyhdr} 
\fancyhf{}
\cfoot{\thepage}
\pagestyle{fancy} 

%%%%%%%%%%%%%%%%%%%%%%%%%%%%%%%%%%%%%%%%%%%%%%%%%%%%%%%%%%%%%%%%%%%%%%%%%%%%%%%%%%%%%%%%%%%%%%%% %%%%%%%%%%%%%%%%%%%%%%%%%%%%%%%%%%%%%%%%%%%%%%%%%%%%%%%%%%%%%%%%%%%%%%%%%%%%%%%%%%%%%%%%%%%%%%%%%%%%%%%%%%%%%%%%%%%%%%%%%%%%%%%%%%%%%%%%%%%%%%%%%%%%%%%%%%%%%%%%%%%%%%%%%%%%%%%
\usepackage{amsfonts}
\usepackage{amsmath}
\usepackage{eurosym}
\usepackage{geometry}

\usepackage{caption,booktabs}

\captionsetup{
  justification = centering
}

\setcounter{MaxMatrixCols}{12}

\geometry{left=0.3in,right=0.4in,top=0.4in,bottom=0.2in}

\geometry{left=1in,right=1in,top=1in,bottom=1in}

\theoremstyle{plain}
\newtheorem{theorem}{Theorem}
\newtheorem{lemma}[theorem]{Lemma}
\newtheorem{proposition}[theorem]{Proposition}

\newtheorem{corollary}[theorem]{Corollary}
\newtheorem*{conjecture}{Conjecture}

\usetikzlibrary{arrows}

\sloppy
\usepackage{fouriernc}

\begin{document}

\noindent{\Large
Mutations of $\mathfrak{perm}$   algebras}\footnote{
The  first part of the work was supported by the Science Committee of the Ministry of Education and Science of the Republic of Kazakhstan (Grant No. AP22683764); 
FCT  UIDB/MAT/00212/2020  and UIDP/MAT/00212/2020.
The second part of the work was supported by the Russian Science Foundation under grant 22-71-10001.
}\footnote{We are grateful to the anonymous reviewer for the careful reading of our text and valuable comments. }\footnote{Corresponding author: Farukh Mashurov   (farukh.mashurov@sdu.edu.kz)}

 \bigskip

\begin{center}

 {\bf
Ivan Kaygorodov\footnote{CMA-UBI, University of  Beira Interior, Covilh\~{a}, Portugal; \ Moscow Center for Fundamental and Applied Mathematics,      Russia; 
  \    Saint Petersburg  University, Russia;\       kaygorodov.ivan@gmail.com}
  \&
Farukh Mashurov\footnote{ SDU University, Kaskelen, Kazakhstan; \ 	farukh.mashurov@sdu.edu.kz}

}

\end{center} 

 \medskip 
 
\noindent{\bf Abstract}:
{\it  
We describe mutation elements in free $\mathfrak{perm}$ algebras. 
Moreover, we construct a base of free mutation of free $\mathfrak{perm}$ algebra. Using Cohn's criterion for the speciality of algebras, we show that there is an exceptional homomorphic image of mutation of free $\mathfrak{perm}$ algebras.

}

 \medskip

\noindent {\bf Keywords}:
{\it Perm algebras, mutation of algebras, polynomial identities.}

 \bigskip
 
\noindent {\bf MSC2020}: 17A30, 17B01, 17B35.

 \medskip

\section{ Introduction}

The notion of quasi-associative algebras first appeared in one seminal paper by Albert \cite{albert}. 
Namely, an associative algebra with a new multiplication $ x\star y=\lambda xy +(1-\lambda)yx$ for a fixed element $\lambda$ from the basic field gives a quasi-associative algebra.
If $\lambda=\frac{1}{2}$ we have the case of special Jordan algebras.
 Quasi-associative algebras play the principal role in the classification of simple non-commutative Jordan algebras \cite{oe}.
On the other hand, the same type of the modification of the original algebra product can be considered in non-associative cases too \cite{Ded}.
Another related notion ($q$-algebras) was introduced by Dzhumadil'daev in \cite{dzh08,dzh09}. 
He considered a new algebra product $ x\star y= xy +q yx$ on Leibniz and Zinbiel algebras.
A version of $q$-algebras for algebraic structures with two multiplications is given in \cite{bai23} for $q$-dendriform algebras. 
It is well-known that $(-1)$-associative algebra is a Lie algebra.
As it is known now, the consideration of $q$-algebras gives new interesting types of algebras.
So, 
the class of $(-1)$-Zinbiel algebras gives Tortkara algebras \cite{dzh09},
the class of $1$-Novikov algebras gives Tortken algebras \cite{dzh02}
and 
the class of $2$-Novikov algebras gives admissible Novikov algebras \cite{bai22}.
Let us also mention that 
$1$-alternative and  $1$-noncommutative Jordan algebras are Jordan algebras;
$(-1)$-bicommutative, 
$(-1)$-Novikov, 
$(-1)$-assosymmetric,
$(-1)$-Pre-Lie algebras are Lie algebras;
$(-1)$-alternative  algebras are Malcev algebras; 
$(-1)$-assocyclic algebras are binary Lie algebras; 
and $(-1)$-$\mathfrak{perm}$ algebras are metabelian Lie algebras \cite{FM-BS}.

 The considered constructions admit a good generalization named mutation \cite{eld91,book mutation}. 
Let us fix now two elements $p$ and $q$  from the underline vector space of our algebra ${\rm A}$ and 
consider the new multiplication 
\begin{center}
$\langle x,y\rangle=(xp)y-(yq)x.$
\end{center}
The new algebra ${\rm A}_{p,q}$ with multiplication  $\langle \cdot, \cdot\rangle$ is called $(p,q)$-mutation of ${\rm A}$
and the multiplication  $\langle \cdot,\cdot\rangle$ is called the \textit{mutation} product.
The case of $q=0$ gives a left homotope \cite{mcc}, which in the case of left commutative or left symmetric algebras gives the notion of the Kantor square  \cite{FK21}.
Mutation of an algebra is a useful tool for constructing new algebras \cite{pump}.
 
 There is one identity in the case of mutations of associative algebras in degree three
\begin{equation}\label{liadm ident}
\sum_{\sigma\in \mathbb  S_3}sgn(\sigma) \langle\langle x_{\sigma(1)},x_{\sigma(2)}\rangle, x_{\sigma(3)}\rangle=
\sum_{\sigma\in \mathbb  S_3}sgn(\sigma)\langle x_{\sigma(1)},\langle x_{\sigma(2)},x_{\sigma(3)}\rangle\rangle. \end{equation}
  
  Montaner described the identities of degree less than five in mutations of associative algebras \cite{Montaner}. 
  It is an open question to describe all identities in mutations of associative algebras.  Recently, the results of  Montaner improved up to degree six by   Bremner, Brox, and Sánchez-Ortega in \cite{bremner-Juana}.

  \medskip 
  
An associative algebra with the left-commutative identity
\begin{equation*}\label{perm id}
abc=bac \end{equation*}
is called a $\mathfrak{perm}$   algebra (see \cite{Perm 1}). 
The free $\mathfrak{perm}$ algebra plays an important role in operad theory \cite{bremner-dotsenko, Vallette}.
In \cite{FM-BS}, it was shown that a $\mathfrak{perm}$ algebra under commutator is a metabelian Lie algebra and described Lie and Jordan elements in a free $\mathfrak{perm}$ algebra.
The necessary and sufficient conditions for a left-symmetric algebra to be embeddable into a differential $\mathfrak{perm}$ 
 algebra were studied in  \cite{KS22}.
Relations between Novikov dialgebras and $\mathfrak{perm}$ algebras were studied in \cite{MS22}.

Let $\mathfrak{Perm}$ be a variety of $\mathfrak{perm}$ algebras. Let $P\in \mathfrak{Perm}$ and ${ P}_{p,q}= ({ P},\langle\cdot,\cdot\rangle)$ be a $(p,q)$-mutation of ${P}$ under the mutation product, i.e.  multiplication  $\langle \cdot,\cdot\rangle.$ Define $\mathfrak{Perm}_{p,q}$ as the class of all mutations of all $\mathfrak{perm}$ algebras. Any algebra $\rm B$ is called \textit{special} concerning the multiplication  $\langle \cdot,\cdot \rangle,$ if there exist a $\mathfrak{perm}$ algebra $P$ and $p,q\in P$ such that $\rm  B$ is a subalgebra of $P_{p,q}$, otherwise it is called \textit{exceptional}.
Let $X=\{x_1,x_2,\ldots\}$ be a set and $P(X)$ be the free $\mathfrak{perm}$ algebra generated by $X$. A polynomial in $P(X)$ is called \textit{mutation} element of $P(X)$ if it can be expressed by elements of $X$ in terms of multiplication $\langle \cdot,\cdot\rangle.$

We consider all algebras over a field $\mathbb F$ of characteristic $0$.
Let us fix the following notations:
$$\begin{array}{rcl}
[x,y] & := & xy-yx,\\
\langle x, y\rangle & := & (xp)y-(yq)x,\\
\langle x, y, z  \rangle  & := & \langle \langle x, y\rangle, z  \rangle - \langle x, \langle y, z \rangle \rangle.
\end{array}$$ 

In this paper, we describe mutation elements in free $\mathfrak{perm}$ algebras. 
Moreover, we construct a base of free mutation of free $\mathfrak{perm}$ algebras. 
The construction of bases of free objects is not new 
(the first problems of this type were solved in the 1950s), 
but it is still an actual problem (see, for example, a recent paper \cite{B} and references therein).
Using Cohn's criterion for the specialty of algebras \cite{Cohn}, we show that there is an exceptional homomorphic image of mutation of free $\mathfrak{perm}$ algebras. We also give a necessary and sufficient condition for algebras whose mutation algebras are Lie-admissible.

%\newpage

\section{$(p,q)$-mutations of $\mathfrak{perm}$ algebras}

Let $P(X)$ be the free $\mathfrak{perm}$ algebra generated by $X$ over $\mathbb F,$ where   
$X=\{x_1,x_2,\ldots\}$ is a set of generators. 
For $a_1, \ldots, a_n\in P(X)$ denote by $a_1a_2\ldots a_{n-1}a_n$ 
an element $((\ldots(a_1a_2) \ldots )a_{n-1})a_n.$
For two elements $p,q\notin P(X),$ we consider the $(p,q)$-mutation $P_{p,q}(X\cup\{p,q\})$ of the free $\mathfrak{perm}$ algebra $P(X\cup\{p,q\})$ with  the set of generators $X\cup\{p,q\}.$ 
Let $P_{p,q}(X)$ be a subalgebra of $P_{p,q}(X\cup\{p,q\})$ generated by the set $X.$
We define the set $B=X\cup B_1\cup B_2\cup B_3$ where  
\begin{longtable}{lcl}
$B_1$&$=$&$\Big\{x_ipx_j-x_j qx_i\in P(X\cup\{p,q\})\Big|x_i,x_j\in X\Big\},$\\ 

$B_2$&$=$&$\Big\{(p-q)^{n-1}x_{j_n}\ldots x_{j_1}\in P(X\cup\{p,q\}) \Big| \begin{array}{l}x_{j_1},\ldots,x_{j_n}\in X, n>2 \\ 
j_2\leq j_3\leq\ldots\leq j_n \end{array} \Big\},$ \\

$B_3$&$=$&$ \Big\{p^{n-1-i} q^i x_{j_n}\ldots x_{j_3}[x_{j_2},x_{j_1}]\in P(X\cup\{p,q\})
\Big| \begin{array}{l}x_{j_1},\ldots,x_{j_n}\in X,  1\leq i\leq n-1,
\\
 
j_2>j_1\leq j_3\leq\ldots\leq j_n,n>2  \end{array} \Big\}.$
\end{longtable}
In \cite{FM-BS} it was proved that $\mathfrak{perm}$ algebra under commutator is a metabelian Lie algebra. Since, $$x_{j_n}\ldots x_{j_3}[x_{j_2},x_{j_1}] \ = \ 
[x_{j_n},[\ldots ,[x_{j_3},[x_{j_2},x_{j_1}]]\ldots]],$$
 we have every element of the form $x_{i_n}\ldots x_{i_3}[x_{i_2},x_{i_1}]$ is a linear combination of elements  \begin{equation*}\label{base of Meta.Lie}
       x_{j_n}\ldots x_{j_3}[x_{j_2},x_{j_1}],
    \end{equation*}
	where $j_2>j_1\leq j_3\leq\ldots\leq j_n.$ Therefore, throughout the article, we will not dwell on the details of rewriting elements of this type.

\subsection{Mutation elements in a free $\mathfrak{perm}$ algebra}

In this subsection, we describe mutation elements in $P(X\cup\{p,q\}).$  

To simplify the expressions, we note the following obvious identities in every $\mathfrak{perm}$ algebra:
\begin{equation}
\label{relations} \begin{array}{c}
\langle a, b \rangle \ =\  (p - q)ab + q [a, b],
\\
\langle a, bc \rangle \ =\  b \langle a, c \rangle,
\\
\langle ab, c \rangle \ =\ a \langle b, c \rangle,
\\
\langle a, [b,c] \rangle \ =\  p a[b,c],
\\
\langle [a,b], c \rangle \ = \ -q c[a,b]. \\
\end{array}  
\end{equation}

Although these identities are straightforward, they are useful for rewriting elements in the desired form. For example, we can show that the following identity holds in a $\mathfrak{perm}$ algebra using the identities above:
\begin{equation}\label{hom ident1}
     \langle b,\langle a, c\rangle \rangle \ =\ a p \langle b, c\rangle - c q\langle b, a\rangle.
\end{equation}

Moreover, the following proposition follows from direct calculations using the identities \eqref{relations}.

\begin{proposition}\label{open brackets deg3}
Let $  x_1 ,  x_2 ,  x_3  \in P_{p,q}( X),$ then 

\begin{longtable}{rcl}
$\langle\langle   x_1 ,  x_2 \rangle,  x_3 \rangle$&$=$&$(p-q)^2  x_1   x_2   x_3 +pq  x_1 [  x_2 ,  x_3 ]-q^2   x_2 [  x_1 ,  x_3 ],$\\

$\langle   x_1 ,\langle   x_2 ,  x_3 \rangle\rangle$&$=$&$(p-q)^2  x_1   x_2   x_3 +pq  x_1 [  x_2 ,  x_3 ]+pq  x_2 [  x_1 ,  x_3 ]-q^2   x_2 [  x_1 ,  x_3 ],$\\

$\langle \langle   x_1 ,   x_2 \rangle ,\langle   x_3 ,  x_4 \rangle\rangle $&$ = $&$ (p-q)^3   x_1    x_2    x_3    x_4 
+(p-q)pq   x_1    x_2  [  x_3 ,   x_4 ]+$ \\
&&\multicolumn{1}{r}{$  (p-q)pq   x_1    x_3  [  x_2 ,   x_4 ]
-(p-q)q^2   x_2    x_3  [  x_1 ,   x_4 ].
$}
\end{longtable} 
\end{proposition}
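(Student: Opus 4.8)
The plan is to prove all three identities by direct expansion using the five basic relations collected in \eqref{relations}, reducing each nested mutation product to the normal form in which the scalars built from $p$ and $q$ are collected in front of products and commutators of the generators. Since the statement only asserts equalities obtained ``from direct calculations,'' the work is mechanical, but the relations in \eqref{relations} are precisely the tools that make the bookkeeping manageable, so I would organize the computation around them rather than expanding everything from the raw definition $\langle x,y\rangle=(xp)y-(yq)x$.

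For the first identity, I would start from the innermost product and apply the first relation in \eqref{relations}, writing $\langle x_1,x_2\rangle=(p-q)x_1x_2+q[x_1,x_2]$. I then substitute this into $\langle\,\cdot\,,x_3\rangle$ and expand the outer bracket on each of the two summands separately: on the term $(p-q)x_1x_2$ I use $\langle a,b\rangle=(p-q)ab+q[a,b]$ once more with $a=(p-q)x_1x_2$, while on the term $q[x_1,x_2]$ I use the relation $\langle [a,b],c\rangle=-qc[a,b]$. Collecting the resulting scalar coefficients $(p-q)^2$, $pq$, and $-q^2$ in front of the appropriate $\mathfrak{perm}$ monomials gives the stated right-hand side; here I would also invoke the left-commutative identity $abc=bac$ to normalize the monomial $x_1x_2x_3$ and the commutator terms. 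The second identity is handled the same way but expanding the \emph{inner} bracket on the right slot first, and then using $\langle a,[b,c]\rangle=p\,a[b,c]$ together with $\langle a,bc\rangle=b\langle a,c\rangle$; the extra term $pq\,x_2[x_1,x_3]$ relative to the first identity is exactly the signature of non-associativity of the mutation product, and keeping track of it correctly is the one place where a sign or coefficient slip is easiest to make.

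The third identity is the genuinely laborious one, since it is a degree-four element and both factors $\langle x_1,x_2\rangle$ and $\langle x_3,x_4\rangle$ must be expanded before the outer bracket is applied. The efficient route is to first rewrite $\langle x_3,x_4\rangle=(p-q)x_3x_4+q[x_3,x_4]$ and treat the outer product $\langle\langle x_1,x_2\rangle,\,-\,\rangle$ as linear in its second argument, so that it splits into $\langle\langle x_1,x_2\rangle,(p-q)x_3x_4\rangle$ and $\langle\langle x_1,x_2\rangle,q[x_3,x_4]\rangle$. For the first piece I can reuse the already-established degree-three computation, and for the second I apply $\langle a,[b,c]\rangle=p\,a[b,c]$. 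Throughout I would rely on the metabelian identity recorded just before the proposition — namely that any product of the form $x_{i_n}\cdots x_{i_3}[x_{i_2},x_{i_1}]$ equals the corresponding iterated Lie bracket — to collapse nested commutators and to justify that terms such as $[[x_1,x_2],x_3]$ vanish or simplify, which is what ultimately produces the clean four-term answer.

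I do not expect a conceptual obstacle; the only real difficulty is combinatorial control of the many terms in degree four and the correct application of left-commutativity and the metabelian collapse so that no spurious commutator-of-commutator terms survive. To guard against arithmetic error I would verify the coefficients by specialization, for instance checking the $q=0$ left-homotope case (where only the $(p-q)^3=p^3$ term should remain) and the commutative-algebra case (where all commutator terms drop out), each of which the stated formulas must satisfy.
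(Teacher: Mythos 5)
Your proposal is correct and coincides with the paper's own justification, which consists of nothing more than the remark that the proposition ``follows from direct calculations using the identities \eqref{relations}'' — exactly the expansion-and-normalization you describe, including reusing the degree-three formula for the degree-four product and splitting off the commutator part of $\langle x_3,x_4\rangle$. The only imprecision is that the term you need to kill in the last identity is the product of two commutators $[x_1,x_2][x_3,x_4]$, which vanishes by $[a,b]c=0$ (an immediate consequence of left-commutativity) rather than by the metabelian collapse of nested brackets; this does not affect the argument.
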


%\begin{proof}
%    $$\langle \langle x_1, x_2\rangle ,\langle x_3,x_4\rangle\rangle = (p-q)^3 x_1 x_2 x_3 x_4+p^2 q x_1 x_2 [x_3, x_4]+p^2 q x_1 x_3 [x_2, x_4]$$

%$$-p q^2 x_1 x_2 [x_3, x_4]-p q^2 x_1 x_3  [x_2, x_4]-p q^2 x_2 x_3 [x_1, x_4]+ q^3 x_2 x_3 [x_1, x_4].$$
%\end{proof}

In particular, the following two lemmas prove that the elements of the sets $B_2$ and $B_3$ are mutation elements in  $P(X\cup\{p,q\}).$ We write $a\equiv b$ if $a-b\in P_{p,q}(X).$

\begin{lemma}\label{mutation element1 lemma}
Let $  x_1 ,  x_2 , \ldots,   x_n  \in P_{p,q}( X),$ then 
\begin{equation}\label{mutation element1}
    (p-q)^{n-1}  x_n \ldots   x_1 \equiv0,\,\, n>2.
\end{equation}\end{lemma}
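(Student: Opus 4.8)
The plan is to induct on $n$, using the rewriting rules \eqref{relations} to strip off the ``special'' rightmost generator $x_1$ and lower the degree. The engine of the induction is the third relation in \eqref{relations}, namely $\langle ab,c\rangle=a\langle b,c\rangle$, applied repeatedly along a left-normed product: for factors $a_1,\dots,a_m$ it yields $\langle a_1\cdots a_m,c\rangle=a_1\cdots a_{m-1}\langle a_m,c\rangle$. Feeding in the first argument $(p-q)^{n-2}x_n\cdots x_2$, whose last factor is $x_2$, together with $\langle x_2,x_1\rangle=(p-q)x_2x_1+q[x_2,x_1]$, I obtain the key identity
\begin{equation*}
\langle (p-q)^{n-2}x_n\cdots x_2,\,x_1\rangle \;=\; (p-q)^{n-1}x_n\cdots x_1 \;+\; q\,(p-q)^{n-2}x_n\cdots x_3[x_2,x_1],
\end{equation*}
where I use that in a $\mathfrak{perm}$ algebra every factor but the rightmost may be permuted freely, so that the factors $(p-q)$ and $q$ collect at the front.

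For the inductive step ($n\ge 4$) the left-hand side is a mutation element: applying the induction hypothesis to the $n-1$ generators $x_2,\dots,x_n$ gives $(p-q)^{n-2}x_n\cdots x_2\equiv 0$, whence $\langle (p-q)^{n-2}x_n\cdots x_2, x_1\rangle\in P_{p,q}(X)$ as a mutation product of two elements of $P_{p,q}(X)$. The identity then reads
\begin{equation*}
(p-q)^{n-1}x_n\cdots x_1 \;\equiv\; -\,q\,(p-q)^{n-2}x_n\cdots x_3[x_2,x_1],
\end{equation*}
so it remains to show the right-hand side lies in $P_{p,q}(X)$. Expanding $(p-q)^{n-2}$ binomially turns it into a linear combination of the words $p^{\,n-1-i}q^{\,i}\,x_n\cdots x_3[x_2,x_1]$ with $1\le i\le n-1$, i.e.\ exactly the elements collected in $B_3$.

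This is where the real work lies, and it is also why the base case $n=3$ must be treated by hand: there the reduction produces $\langle (p-q)x_3x_2,x_1\rangle$, whose first argument is a degree-two product that is \emph{not} a mutation element. For $n=3$ I would instead start from Proposition~\ref{open brackets deg3}, which gives $(p-q)^2x_3x_2x_1=\langle\langle x_3,x_2\rangle,x_1\rangle-pq\,x_3[x_2,x_1]+q^2x_2[x_3,x_1]$, and then certify the two commutator corrections directly: the factor $pq\,x_j[x_i,x_k]$ is, up to sign, the associator $\langle\cdot,\cdot,\cdot\rangle$ of three generators (again by Proposition~\ref{open brackets deg3}), while the $q^2$-term is recovered by bracketing the elementary mutation element $(p+q)[x_i,x_k]=\langle x_i,x_k\rangle-\langle x_k,x_i\rangle$ with a generator: using $\langle ab,c\rangle=a\langle b,c\rangle$ and the last relation of \eqref{relations} one finds $\langle (p+q)[x_i,x_k],x_j\rangle=-(pq+q^2)\,x_j[x_i,x_k]$, so that $q^2x_j[x_i,x_k]=(pq+q^2)x_j[x_i,x_k]-pq\,x_j[x_i,x_k]\in P_{p,q}(X)$. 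The main obstacle, then, is exactly the statement that all the $q$-heavy commutator words $p^{\,n-1-i}q^{\,i}x_n\cdots x_3[x_2,x_1]$ are mutation elements; this is the content of the companion lemma for $B_3$, which I would establish in tandem (or beforehand) by the same mechanism — combining associators with brackets of the elements $(p+q)[x_i,x_j]$ — and then substitute back into the reduction above to conclude $(p-q)^{n-1}x_n\cdots x_1\equiv 0$.
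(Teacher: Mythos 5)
Your reductions are all correct as algebra, but as a proof of Lemma~\ref{mutation element1 lemma} the argument has a structural gap: the inductive step stands or falls on the claim that the correction terms $p^{\,n-1-i}q^{\,i}x_n\cdots x_3[x_2,x_1]$ ($1\le i\le n-1$) are mutation elements, and you defer exactly this to a ``companion lemma'' to be established ``in tandem (or beforehand).'' In the paper that companion statement is Lemma~\ref{mutation element2 lemma}, and its proof there \emph{uses} Lemma~\ref{mutation element1 lemma}: the case $i=n-1$ is obtained by applying $(p-q)^2x_nx_{n-1}u\equiv 0$ to $u=q^{n-3}x_{n-2}\cdots[x_2,x_1]$. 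So you cannot simply cite the $B_3$ lemma; you must give it an independent proof or set up a joint induction explicitly (this is doable: after the degree-$3$ base case, left- and right-bracketing a degree-$(n-1)$ element of $B_3$ with a generator produces, via $\langle a,[b,c]\rangle=pa[b,c]$ and $\langle[a,b],c\rangle=-qc[a,b]$ from \eqref{relations}, the exponent patterns $p^{\,n-1-i}q^{\,i}$ for $i\le n-2$ and for $i\ge 2$ respectively, which together cover all $i$). Your $n=3$ mechanism is fine, but ``the same mechanism'' at general $n$ is precisely the step that has to be written out, and as stated the plan risks circularity.

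The paper sidesteps all of this with a different inductive step. Instead of bracketing the degree-$(n-1)$ identity with $x_1$ on the right (which pushes $\langle x_2,x_1\rangle=(p-q)x_2x_1+q[x_2,x_1]$ into the terminal slot and creates the commutator correction), it substitutes $\langle x_n,x_{n-1}\rangle$ for $x_{n-1}$ in $(p-q)^{n-2}x_{n-1}\cdots x_1\equiv 0$. Because $x_{n-1}$ occupies a non-terminal position of the left-normed word, both summands of $\langle x_n,x_{n-1}\rangle=(x_np)x_{n-1}-(x_{n-1}q)x_n$ rearrange by left-commutativity and associativity to the \emph{same} monomial, with coefficients $p$ and $q$, so the substitution yields $(p-q)^{n-1}x_nx_{n-1}\cdots x_1\equiv 0$ directly --- no commutator term appears and no appeal to the $B_3$ lemma is needed. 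This keeps Lemma~\ref{mutation element1 lemma} self-contained and lets Lemma~\ref{mutation element2 lemma} be derived from it afterwards, which is the dependency order the paper relies on.
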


\begin{proof} 
We prove it by induction on $n.$ The base of induction is $n = 3$. 
Using Proposition $\ref{open brackets deg3}$ we have
\begin{center}
$(p-q)^2  x_3   x_2   x_1 =-\langle\langle   x_3 ,  x_1 \rangle,  x_2 \rangle+2\langle\langle   x_3 ,  x_2 \rangle,  x_1 \rangle+\langle   x_1 ,\langle   x_3 ,  x_2 \rangle\rangle-\langle   x_3 ,\langle   x_2 ,  x_1 \rangle\rangle.$\end{center}

Suppose that the equality $(\ref{mutation element1})$ holds for $n-1$. Then we have
	 $$(p-q)^{n-2}  x_{n-1} \ldots   x_1 \equiv0.$$
	 
	 If we put  $\langle   x_n ,  x_{n-1} \rangle$ instead of $  x_{n-1} ,$ we obtain  
 \begin{center}
 $(p-q)^{n-2}\langle   x_n ,  x_{n-1} \rangle   x_{n-2}\ldots   x_1 =(p-q)^{n-2}(  x_n  p   x_{n-1} -  x_{n-1} q  x_n )   x_{n-2}\ldots   x_1 \equiv0.$
 \end{center}
 
 By left-commutative identity, we have
 \begin{center} $(p-q)^{n-2}(p-q)  x_{n}  x_{n-1} \ldots   x_1 =(p-q)^{n-1}  x_{n}  x_{n-1} \ldots   x_1 \equiv0.$
 \end{center}
\end{proof}

\begin{lemma}\label{mutation element2 lemma}
Let $  x_1 ,  x_2 , \ldots,   x_n  \in P_{p,q}( X ),$ then 
for $i\in\{1,\ldots,n-1\}$ and $n>2$ we have
\begin{equation}\label{mutation element2}
    p^{n-1-i} q^i   x_n \ldots   x_3 [  x_2 ,  x_1 ]\equiv0,
\end{equation} 
\end{lemma}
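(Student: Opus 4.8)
The plan is to argue by induction on $n$, building the required elements from two base identities in degree $3$ and then applying two ``raising'' operations, each of which adds one generator and increases the exponent of either $p$ or $q$. Throughout I write $u_m = x_m\ldots x_3[x_2,x_1]$, so that the elements to be produced are exactly $p^{s}q^{t}u_n$ with $s+t=n-1$ and $t=i\geq 1$. The first thing I would record is the annihilation $u_m p = u_m q = 0$: in $u_m p = x_m\ldots x_3(x_2x_1-x_1x_2)p$ the last factor is $p$, so by left-commutativity one may swap the non-final factors $x_1$ and $x_2$, giving $x_m\ldots x_3x_2x_1p = x_m\ldots x_3x_1x_2p$ and hence $u_mp=0$; the same computation works for $q$.

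Using these annihilations together with $\langle a,b\rangle=(ap)b-(bq)a$ and the identities \eqref{relations}, I would derive the two raising operations
\[
\langle x_{m+1},u_m\rangle=(x_{m+1}p)u_m=p\,u_{m+1},\qquad
\langle u_m,x_{m+1}\rangle=-(x_{m+1}q)u_m=-q\,u_{m+1}.
\]
Pulling scalar monomials through with $\langle a,bc\rangle=b\langle a,c\rangle$ and $\langle ab,c\rangle=a\langle b,c\rangle$ upgrades these to $\langle x_{m+1},p^{s}q^{t}u_m\rangle=p^{s+1}q^{t}u_{m+1}$ and $\langle p^{s}q^{t}u_m,x_{m+1}\rangle=-p^{s}q^{t+1}u_{m+1}$. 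Since $P_{p,q}(X)$ is closed under $\langle\cdot,\cdot\rangle$ and contains every $x_{m+1}$, each operation sends a mutation element to a mutation element; that is, $p^{s}q^{t}u_m\equiv 0$ forces both $p^{s+1}q^{t}u_{m+1}\equiv 0$ and $p^{s}q^{t+1}u_{m+1}\equiv 0$.

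For the base case $n=3$ I would extract two identities from Proposition~\ref{open brackets deg3}. Combining its first two displayed formulas gives $\langle x_1,\langle x_2,x_3\rangle\rangle-\langle\langle x_1,x_2\rangle,x_3\rangle=pq\,x_2[x_1,x_3]$, which is a mutation element, so after relabeling the generators $pq\,u_3\equiv 0$. For the top $q$-power, the identity $\langle\langle x_1,x_2\rangle,x_3\rangle=(p-q)^2x_1x_2x_3+pq\,x_1[x_2,x_3]-q^2x_2[x_1,x_3]$ has a mutation element on the left, while its first right-hand term is $\equiv 0$ by Lemma~\ref{mutation element1 lemma} and its second is $\equiv 0$ by the identity just obtained; hence $q^2u_3\equiv 0$.

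Finally I would assemble the general case. Starting from $pq\,u_3$ (exponents $(1,1)$) and applying the $p$-raise $s-1$ times and the $q$-raise $t-1$ times produces $p^{s}q^{t}u_n$ for every $s\geq 1,\ t\geq 1$ with $s+t=n-1$, i.e.\ all $i$ with $1\leq i\leq n-2$; starting from $q^2u_3$ (exponents $(0,2)$) and applying only $q$-raises produces $q^{n-1}u_n$, the remaining case $i=n-1$. I expect the main subtlety to lie precisely here: the raising operations can only increase exponents, so the case $s=0$ genuinely needs the second base identity $q^2u_3\equiv 0$, whereas no sequence of operations ever yields $p^{n-1}u_n$ (the excluded value $i=0$), matching the fact that the pure-$p$ element is not a mutation element. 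Checking that the reachable set of exponents is exactly $\{(s,t) : s\geq 0,\ t\geq 1,\ s+t=n-1\}$ is the heart of the argument; everything else reduces to left-commutativity and the relations \eqref{relations}.
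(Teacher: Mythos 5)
Your proof is correct, and although it keeps the paper's overall skeleton (induction on $n$, with the degree-three base case extracted from Proposition \ref{open brackets deg3} and Lemma \ref{mutation element1 lemma}), the inductive step is genuinely different. The paper substitutes $\langle x_n,x_{n-1}\rangle$ for $x_{n-1}$ and $\langle x_2,x_n\rangle$ for $x_2$ in the inductive hypothesis and subtracts the results to isolate $p^{n-1-i}q^i x_n\ldots x_3[x_2,x_1]$; this only reaches $i\leq n-2$, so the top case $i=n-1$ requires a separate argument using $(p-q)^2u\equiv 0$ and cancellation of the already-established terms. You instead start from the annihilation $u_mp=u_mq=0$ (a consequence of $[a,b]c=0$ in any perm algebra, and the source of the last two relations in \eqref{relations}), which makes the external products $\langle x_{m+1},p^{s}q^{t}u_m\rangle=p^{s+1}q^{t}u_{m+1}$ and $\langle p^{s}q^{t}u_m,x_{m+1}\rangle=-p^{s}q^{t+1}u_{m+1}$ into clean one-step raising maps preserving membership in $P_{p,q}(X)$; the lemma then reduces to the elementary check that every exponent pair $(s,t)$ with $s+t=n-1$ and $t\geq 1$ is reachable from $(1,1)$ or $(0,2)$, with no case split inside the induction. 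Your route buys a uniform inductive step and makes visible why $i=0$ is excluded (your aside that $p^{n-1}u_n$ is \emph{not} a mutation element is not proved here, but it is not needed either); the paper's route avoids stating the annihilation explicitly at the cost of the extra computation for $i=n-1$. All the individual computations you rely on --- the two base identities, the raising formulas, and the closure of $P_{p,q}(X)$ under $\langle\cdot,\cdot\rangle$ --- check out.
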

\begin{proof} 
We prove it by induction on $n.$ The base of induction is $n = 3$. Using Proposition $\ref{open brackets deg3}$ we have 
\begin{longtable}{llclcl}
for $i=1:$ \ & $\langle\langle   x_1 ,  x_3 \rangle,  x_2 \rangle-\langle   x_1 ,\langle   x_3 ,  x_2 \rangle\rangle$&$ =$&$ pq   x_3 [  x_2 ,  x_1 ]$&$  \equiv $&$   0,$\\

for $i=2:$ \ & $\langle   x_1 ,\langle   x_2 ,  x_3 \rangle\rangle-\langle   x_2 ,\langle   x_1 ,  x_3 \rangle\rangle$&$ =$&$ q^2   x_3 [  x_2 ,  x_1 ]$&$  \equiv $&$ 0.$
\end{longtable}
Suppose that the equality $(\ref{mutation element2})$ holds for $n-1$.
Then   for $i\in\{1,\ldots,n-2\}$ we have
\begin{equation}\label{assumption mutation element2} p^{n-2-i} q^i   x_{n-1}   x_{n-2}\ldots   x_3 [  x_2 ,  x_1 ]\equiv0.\end{equation}
	
	Set $z=  x_{n-2}\ldots   x_3 $ and put  $\langle   x_n ,  x_{n-1} \rangle$ instead of $  x_{n-1} $ in  $(\ref{assumption mutation element2}).$  Then by left-commutative identity we have 
	  \begin{equation}\label{pq1}
	      p^{n-2-i} q^i \langle   x_n ,  x_{n-1} \rangle z[  x_2 ,  x_1 ]=   p^{n-2-i} q^i(p-q)   x_n    x_{n-1}  z[  x_2 ,  x_1 ]\equiv0.
	  \end{equation}
 
On the other hand, if we put  $\langle   x_2 ,  x_{n}\rangle$ instead of $  x_{2}$ in  $(\ref{assumption mutation element2})$   we have 
\begin{flushleft}$ p^{n-2-i} q^i    x_{n-1} z[\langle   x_2 ,  x_{n}\rangle,  x_1 ]= p^{n-2-i} q^i    x_{n-1} z[   x_2 p  x_{n}-  x_{n}q  x_2 ,  x_1 ]\ =$\end{flushleft}
\begin{equation}\label{pq2}
   = \  p^{n-2-i} q^i (p-q)   x_{n}  x_{n-1} z   x_2   x_1 -p^{n-2-i} q^i p   x_{n-1} z   x_2   x_1   x_{n}+p^{n-2-i} q^i q   x_{n}  x_{n-1} z   x_1   x_2  \equiv 0.
    \end{equation}

Consider the difference of the expressions $(\ref{pq1})$ and $(\ref{pq2}),$ then 

\begin{flushleft}$p^{n-2-i} q^i(p-q)   x_n    x_{n-1}  z   x_2    x_1 - p^{n-2-i} q^i(p-q)   x_n    x_{n-1}  z   x_1    x_2 \ -$
\end{flushleft}
\begin{center}
$\big(p^{n-2-i} q^i (p-q)   x_{n}  x_{n-1} z   x_2   x_1 -
p^{n-2-i} q^i p   x_{n-1} z   x_2   x_1   x_{n}+
p^{n-2-i} q^i q   x_{n}  x_{n-1} z   x_1   x_2 \big) \  =$
\end{center}
%$$-p^{n-2-i} q^i p   x_{n}  x_{n-1} z   x_1    x_2 +p^{n-2-i} q^i p   x_{n-1} z   x_2   x_1   x_{n}=$$
\begin{flushright}$= \ p^{n-2-i} q^i p   x_{n-1} z   x_1 [  x_2 ,  x_{n}]\equiv 0.$
\end{flushright}

Therefore, we obtain for $i\in\{1,2, \ldots, n-2\}$ 

 \begin{equation}\label{rel mut elem2}
     p^{n-1-i} q^i   x_n \ldots   x_3 [  x_2 ,  x_1 ]\equiv0.\end{equation}
Now it remains to show that for $i=n-1$ and $n>3,$ the following is true 
$q^{n-1}   x_n \ldots   x_3 [  x_2 ,  x_1 ]\equiv0.$
By Lemma $\ref{mutation element1 lemma},$ it is easy to see that for $u \in P_{p,q}(X)$ we have
$(p-q)^2   x_n    x_{n-1}  u\equiv0.$
By induction on $n$ and $(\ref{rel mut elem2})$ if we set $u=q^{n-3}   x_{n-2}\ldots [  x_2 ,  x_1 ]\in P_{p,q}(X)$ we obtain
\begin{center}
$(p-q)^2 q^{n-3}   x_n    x_{n-1}     x_{n-2} \ldots [  x_2 ,  x_1 ]\equiv0.$
\end{center}
By $(\ref{rel mut elem2})$ we have  $$q^{n-1}   x_n \ldots   x_3 [  x_2 ,  x_1 ]\equiv0.$$
This completes the proof.
 
\end{proof}

\subsection{Mutation product of elements of the set $B$}
Our aim in this subsection is to construct a linear basis for $P_{p,q}( X ).$ 

For every monomial $v \in P_{p,q}(X),$ we define the degree of $v$ as the number of $x_i$'s  in $v$.
First, we show that $P_{p,q}( X )$ is spanned by the set $B.$

\begin{lemma}\label{mut product of B} Every element of $P_{p,q}(X)$ is a linear combination of elements of the set $B.$
\end{lemma}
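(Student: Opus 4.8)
The plan is to show that an arbitrary monomial $v \in P_{p,q}(X)$, obtained by repeatedly applying the mutation product $\langle \cdot, \cdot \rangle$ to generators in $X$, can be rewritten as a linear combination of elements of $B = X \cup B_1 \cup B_2 \cup B_3$. I would argue by induction on the degree of $v$ (the number of $x_i$'s). For degree one the claim is trivial since $v \in X$, and for degree two the identity $\langle a,b\rangle = (p-q)ab + q[a,b]$ from \eqref{relations} shows $\langle x_i, x_j\rangle = (p-q)x_ix_j + q[x_i,x_j]$, which after reindexing lands in the span of $B_1$ (indeed $x_ipx_j - x_jqx_i$ is exactly $\langle x_i, x_j\rangle$ expanded).

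For the inductive step, any monomial of degree $n > 2$ has the form $\langle u, w \rangle$ where $u, w \in P_{p,q}(X)$ are mutation monomials of smaller degree. By the inductive hypothesis, each of $u$ and $w$ is a linear combination of elements of $B$, so by bilinearity it suffices to compute $\langle \beta, \gamma \rangle$ for basis elements $\beta, \gamma \in B$ and show the result lies in the span of $B$. The key tool here is the set of rewriting rules \eqref{relations}: the identities $\langle a, bc\rangle = b\langle a,c\rangle$, $\langle ab, c\rangle = a\langle b,c\rangle$, $\langle a,[b,c]\rangle = pa[b,c]$, and $\langle [a,b],c\rangle = -qc[a,b]$ let me push a product or commutator occurring inside a mutation bracket outside of it, strictly reducing the nesting of $\langle\cdot,\cdot\rangle$ until only a single innermost bracket remains, which is then resolved by the first relation in \eqref{relations}. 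This produces associative $\mathfrak{perm}$ words prefixed by monomials in $p, q$, possibly with one commutator factor.

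Once $v$ is expressed as an $\mathbb{F}[p,q]$-combination of $\mathfrak{perm}$ words (either of the form $p^a q^b\, x_{j_n}\cdots x_{j_1}$ or $p^a q^b\, x_{j_n}\cdots x_{j_3}[x_{j_2},x_{j_1}]$), I would normalize the coefficient polynomials in $p,q$ by writing them in the basis $\{(p-q)^k\} \cup \{p^{a}q^{b}\}$ as appropriate, and normalize the $\mathfrak{perm}$ words using left-commutativity: any element $x_{j_n}\cdots x_{j_1}$ can be reordered in its first $n-1$ factors (everything except the rightmost) by the identity $abc = bac$, and the commutator words reduce via the metabelian Lie structure noted in the excerpt, so that $x_{j_n}\cdots x_{j_3}[x_{j_2},x_{j_1}]$ may be taken with $j_2 > j_1 \leq j_3 \leq \cdots \leq j_n$. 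The terms then match the shapes defining $B_2$ (leading coefficient $(p-q)^{n-1}$) and $B_3$ (leading coefficient $p^{n-1-i}q^i$).

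The main obstacle is the bookkeeping in the inductive step: after expanding $\langle \beta, \gamma \rangle$ for $\beta, \gamma \in B$, the raw output will contain $\mathfrak{perm}$ words whose $p,q$-coefficients are \emph{not} of the pure forms $(p-q)^{n-1}$ or $p^{n-1-i}q^i$, and whose index orderings violate the constraints in $B_2, B_3$. The crucial point is that the apparently spurious terms are precisely the ones already shown to vanish modulo $P_{p,q}(X)$ in Lemma \ref{mutation element1 lemma} and Lemma \ref{mutation element2 lemma}; invoking \eqref{mutation element1} and \eqref{mutation element2} lets me discard or absorb those terms and retain only the admissible monomials. Thus the real work is verifying that every coefficient polynomial decomposes so that each summand either has an admissible leading factor or is killed by the two preceding lemmas, and I expect this reduction to be the technical heart of the argument.
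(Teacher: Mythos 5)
Your overall strategy coincides with the paper's: induction on degree, reduction by bilinearity to products $\langle\beta,\gamma\rangle$ of elements of $B$, expansion via the rewriting rules \eqref{relations}, and normalization of the resulting $\mathfrak{perm}$ words by left-commutativity and the metabelian Lie identity. The paper simply carries out the case analysis over $(\beta,\gamma)\in B\times B$ explicitly, using \eqref{relations} and Proposition \ref{open brackets deg3} in each case.

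There is, however, one concrete misconception in your plan, located precisely at the point you yourself identify as the technical heart. You propose to dispose of the terms whose $p,q$-coefficients are not of the pure forms $(p-q)^{n-1}$ or $p^{n-1-i}q^i$ by invoking Lemmas \ref{mutation element1 lemma} and \ref{mutation element2 lemma}, on the grounds that those terms ``vanish modulo $P_{p,q}(X)$'' and may be discarded. This misreads the notation: $a\equiv 0$ there means $a\in P_{p,q}(X)$, so those lemmas assert that the elements of $B_2$ and $B_3$ \emph{belong to} $P_{p,q}(X)$ (which is what makes $B$ a candidate basis of $P_{p,q}(X)$ in Theorem \ref{mut element and dim}); they do not annihilate anything and play no role in the spanning argument. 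In fact no such safety net is needed: in every case of the expansion the coefficient of a word $x_{j_n}\cdots x_{j_1}$ is exactly a scalar multiple of $(p-q)^{n-1}$, and the coefficient of a commutator word $x_{j_n}\cdots[x_{j_2},x_{j_1}]$ is a homogeneous polynomial of degree $n-1$ in $p,q$ each of whose monomials is divisible by $q$ (for instance $(p-q)^{m-1}q$ or $(p-q)^{m-1}pq$), hence a linear combination of the admissible $p^{n-1-i}q^i$ with $1\le i\le n-1$. Verifying this in each of the finitely many cases is the actual content of the paper's proof, and it is exactly the part your write-up leaves undone while pointing to the wrong tool for completing it.
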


\begin{proof} We prove the statement by induction on degree $n.$ Let $v=\langle w_l,u_m\rangle$ has degree $n,$ where $ w_l,u_m\in P_{p,q}(X)$ whose degrees are $l$ and $m,$ respectively, and $l+m=n.$
The base of induction when $n=2$ is evident, for $n=3$  follows from Proposition \ref{open brackets deg3}.  Suppose that the statement is true if the degree of $v\in P_{p,q}(X)$ is less than $n>3.$  By induction on $n$ we may assume that $ w_l$ and $u_m$ are in $\mathrm{span}(B)$. We want to show that $$v=\langle w_l,u_m\rangle\in \mathrm{span}(B).$$

Let $l=1$ then $w_1\in X.$ We have $u_m\in B_2\cup B_3.$
\begin{itemize}
    \item If $u_m=(p-q)^{m-1}x_{j_m}\ldots x_{j_1}\in B_2,$ then by \eqref{relations} we have 
    \begin{flushleft}
    $\langle   w_1,(p-q)^{m-1}x_{j_m}\ldots x_{j_1} \rangle\  =\ (p-q)^{m-1}x_{j_m}\ldots x_{j_{2}} \langle   w_1, x_{j_1} \rangle \ =$\end{flushleft}
 \begin{flushright}$= \ (p-q)^{m}   w_1   x_{j_m}\ldots x_{j_1} +(p-q)^{m-1}q  x_{j_m}\ldots  [  w_1,  x_{j_1} ]\ \in \ \mathrm{span}(B).$\end{flushright}

  \item If $u_m=p^{m-1-i} q^i x_{j_m}\ldots [x_{j_{2}},x_{j_1}]\in B_3,$ then by \eqref{relations} we have 

 \begin{flushleft} $\langle   w_1, p^{m-1-i} q^i x_{j_m}\ldots [x_{j_{2}},x_{j_1}]\rangle \ =\  p^{m-1-i} q^i x_{j_m}\ldots x_{j_3} \langle   w_1,[x_{j_{2}},x_{j_1}]\rangle \ =$\end{flushleft}
 \begin{flushright}$= \ p^{m-i} q^{i}   w_1 x_{j_m}\ldots [x_{j_{2}},x_{j_1}]\ \in \ \mathrm{span}(B).$\end{flushright} 
\end{itemize}

Let $l=2$ then $w_2=\langle x_{i_1},x_{i_2}\rangle\in B_1.$ 

\begin{itemize}
    \item If $u_2\in B_1,$ then it follows from the Proposition \ref{open brackets deg3}. 
    \item If $u_m=(p-q)^{m-1}x_{j_m}\ldots x_{j_1}\in B_2,$ then by \eqref{relations} and Proposition \ref{open brackets deg3} we have 
  
\begin{flushleft}
$\langle  \langle  x_{i_1},x_{i_2} \rangle, (p-q)^{m-1}x_{j_m}\ldots x_{j_1} \rangle \ =\ (p-q)^{m-1}x_{j_m}\ldots x_{j_{2}}\langle  \langle  x_{i_1},x_{i_2} \rangle,  x_{j_1} \rangle\ =$\end{flushleft}
 \begin{center}
$= \ (p-q)^{m+1}  x_{i_1} x_{i_2}   x_{j_m} \ldots   x_{j_1} 
-(p-q)^{m-1}pq   x_{j_m} \ldots   x_{i_1}[  x_{j_1} ,   x_{i_2}]+$ \end{center}
 \begin{flushright}
$ (p-q)^{m-1}q^2   x_{j_m} \ldots    x_{i_2}[  x_{j_1} ,  x_{i_1}] 
\ \in  \ \mathrm{span}(B).$
 \end{flushright}

\item If $u_m=p^{m-1-i} q^i   x_{j_m}  \ldots [  x_{j_2} ,  x_{j_1} ]\in B_3,$ then by \eqref{relations} and Proposition \ref{open brackets deg3} we have 
\begin{flushleft}
$\langle \langle   x_{i_1}, x_{i_2}  \rangle,p^{m-1-i} q^i   x_{j_m}  \ldots [  x_{j_2} ,  x_{j_1} ]\rangle \ = \ p^{m-1-i} q^i   x_{j_m}  \ldots x_{j_{2}} \langle \langle   x_{i_1}, x_{i_2}  \rangle,[  x_{j_2} ,  x_{j_1} ]\rangle \ = $\end{flushleft}
 \begin{flushright}$= \ p^{m+1-i} q^i    x_{i_1} x_{i_2}  x_{j_m}  \ldots [  x_{j_2} ,  x_{j_1} ]-p^{m-i} q^{i+1}   x_{i_1} x_{i_2}  x_{j_m}  \ldots [  x_{j_2} ,  x_{j_1} ]\ \in \  \mathrm{span}(B).$ \end{flushright}
\end{itemize}

Assume that $l>2,$ then $w_l\in B_2\cup B_3.$  Let $w_l=(p-q)^{l-1}  x_{i_l} \ldots   x_{i_1}\in B_2.$

\begin{itemize}

    \item If $m=1,$ then $u_1\in X$ and by \eqref{relations} we have  
  \begin{flushleft}  $\langle   (p-q)^{l-1}  x_{i_l} \ldots   x_{i_1}, u_{1} \rangle \ =\ (p-q)^{l-1}  x_{i_l} \ldots   x_{i_2} \langle x_{i_1}  , u_{1} \rangle \ =$\end{flushleft}
    \begin{flushright}
    $=\ (p-q)^{l} x_{i_l}\ldots x_{i_1} u_1+(p-q)^{l-1} q x_{i_l}\ldots x_{i_2}   [x_{i_1}  , u_1]\ \in \ \mathrm{span}(B) .$
    \end{flushright}

    \item If $m=2$ then $u_2=\langle x_{j_1},x_{j_2}\rangle\in B_1,$ and by \eqref{relations} and Proposition \ref{open brackets deg3} we have 
        \begin{flushleft}
     $\langle(p-q)^{l-1}  x_{i_l} \ldots   x_{i_1} ,  \langle x_{j_1},x_{j_2}\rangle \rangle= (p-q)^{l-1}  x_{i_l} \ldots   x_{i_2}\langle x_{i_1} ,  \langle x_{j_1},x_{j_2}\rangle \rangle \ =$\end{flushleft}
    \begin{center}
    $(p-q)^{l+1}   x_{i_l} \ldots   x_{i_1} x_{j_1}x_{j_2}-(p-q)^{l-1} p q   x_{i_l} \ldots   x_{i_1}[x_{j_2},x_{j_1}]+$ \end{center}
      \begin{flushright}$(p-q)^{l-1}p q   x_{i_l} \ldots   x_{i_2} x_{j_1}[  x_{i_1} ,x_{j_2}]- (p-q)^{l-1}q^2   x_{i_l} \ldots   x_{i_2}  x_{j_2}[  x_{i_1} ,x_{j_1}] \ \in\  \mathrm{span}(B) .$\end{flushright}
    
    \item  If $u_m=(p-q)^{m-1}x_{j_m}\ldots x_{j_1}\in B_2,$ then by \eqref{relations} we have 
    \begin{flushleft}
    $\langle   w_l,u_m \rangle \ =\ (p-q)^{l+m-2} x_{i_l}\ldots x_{i_2}  x_{j_m}\ldots x_{j_{2}} \langle x_{i_1}  , x_{j_1} \rangle \ =$\end{flushleft}
    \begin{center}
    $=\ (p-q)^{l+m-1} x_{i_l}\ldots x_{i_2}  x_{j_m}\ldots x_{j_{2}} x_{i_1} x_{j_1}+(p-q)^{l+m-2} q x_{i_l}\ldots x_{i_2}  x_{j_m}\ldots x_{j_{2}} [x_{i_1}  , x_{j_1}] $\end{center}
        \begin{flushright}
$\in\  \mathrm{span}(B) .$
    \end{flushright}
    \item If $u_m=p^{m-1-i} q^i x_{j_m}\ldots [x_{j_{2}},x_{j_1}]\in B_3,$ then by \eqref{relations} we have 
 \begin{flushleft} $\langle   w_l,u_m \rangle =\langle   (p-q)^{l-1}  x_{i_l} \ldots   x_{i_1}, p^{m-1-i} q^i x_{j_m}\ldots [x_{j_{2}},x_{j_1}]\rangle \ = $\end{flushleft}
 \begin{center}$= \  p^{m-1-i} q^i (p-q)^{l-1}  x_{i_l} \ldots   x_{i_2} x_{j_m}\ldots x_{j_3} \langle   x_{i_1},[x_{j_{2}},x_{j_1}]\rangle \ =$\end{center} 
  \begin{flushright}$= \ p^{m-i} q^i (p-q)^{l-1}  x_{i_l} \ldots   x_{i_2} x_{i_1} x_{j_m}\ldots x_{j_3} [x_{j_{2}},x_{j_1}] \ \in \  \mathrm{span}(B).$\end{flushright} 

\end{itemize}

Finally, let $w_l=p^{l-1-i} q^i    x_{i_l} \ldots  [x_{i_2}, x_{i_1}]\in B_3.$

\begin{itemize}

    \item If $m=1,$ then $u_1\in X$ and by \eqref{relations} we have \begin{flushleft}
    $\langle   p^{l-1-i} q^i    x_{i_l} \ldots  [x_{i_2}, x_{i_1}], u_{1} \rangle =p^{l-1-i} q^i    x_{i_l} \ldots x_{i_3}  \langle [x_{i_2}, x_{i_1}]  , u_{1} \rangle =$\end{flushleft}
    \begin{flushright}
    $=\ -p^{l-1-i} q^{i+1}   u_{1} x_{i_l} \ldots  [x_{i_2}, x_{i_1}] \ \in\  \mathrm{span}(B) .$
    \end{flushright}

    \item If $m=2$ then $u_2=\langle x_{j_1},x_{j_2}\rangle\in B_1,$ and by \eqref{relations} and Proposition \ref{open brackets deg3} we have 
        \begin{flushleft}
     $\langle p^{l-1-i} q^i    x_{i_l} \ldots  [x_{i_2}, x_{i_1}] ,  \langle x_{j_1},x_{j_2}\rangle \rangle\ =\  p^{l-1-i} q^i    x_{i_l} \ldots  x_{i_3} \langle [x_{i_2}, x_{i_1}] ,  \langle x_{j_1},x_{j_2}\rangle \rangle \ =$\end{flushleft}
    \begin{center}
    $-p^{l-1-i} q^{i+1}  \langle x_{j_1},x_{j_2}\rangle  x_{i_l} \ldots  x_{i_3}  [x_{i_2}, x_{i_1}] \ =\ -p^{l-1-i} q^{i+1} (p-q) x_{j_1}x_{j_2} x_{i_l} \ldots  x_{i_3}  [x_{i_2},x_{i_1}] $\end{center}
        \begin{flushright}$\in \ \mathrm{span}(B) .$\end{flushright}
    
    \item  If $u_m=(p-q)^{m-1}x_{j_m}\ldots x_{j_1}\in B_2,$ then by \eqref{relations} we have 
    \begin{flushleft}
    $\langle   w_l,u_m \rangle \ =\ (p-q)^{m-1}  p^{l-1-i} q^i      x_{j_m}\ldots x_{j_{2}}  x_{i_l}\ldots x_{i_3}   \langle [x_{i_2}, x_{i_1}],x_{j_1}  \rangle \ =\ $\end{flushleft}
    \begin{flushright}
  $= \ (p-q)^{m-1}  p^{l-1-i} q^{i+1}      x_{j_m}\ldots x_{j_{1}}  x_{i_l}\ldots x_{i_3}    [x_{i_2}, x_{i_1}]  \ \in\  \mathrm{span}(B) .$
    \end{flushright}
    \item If $u_m=p^{m-1-i} q^i x_{j_m}\ldots [x_{j_{2}},x_{j_1}]\in B_3,$ then by \eqref{relations} we have 

 \begin{flushleft} $\langle   w_l,u_m \rangle\  =\ \langle p^{l-1-i} q^i    x_{i_l} \ldots  [x_{i_2}, x_{i_1}], p^{m-1-i} q^i x_{j_m}\ldots [x_{j_{2}},x_{j_1}]\rangle \ = \ $\end{flushleft}
 \begin{flushright}$ =\ p^{l+m-2-2i} q^{2i}   x_{i_l} \ldots   x_{i_3} x_{j_m}\ldots x_{j_3} \langle  [x_{i_2}, x_{i_1}],[x_{j_{2}},x_{j_1}]\rangle \ =\  0 \ \in \ \mathrm{span}(B).$\end{flushright} 

\end{itemize}
This completes the proof.
\end{proof}

Now, we are ready to prove the main result of this section. 

\begin{theorem}\label{mut element and dim} Let  $X=\{x_{1}, \ldots,x_{n}\},$ then the set $B$ forms a base of $P_{p,q}(X).$  Moreover, the multilinear part of $P_{p,q}(X)$ has dimension
 $n+(n-1)^2$ for $n>2.$
\end{theorem}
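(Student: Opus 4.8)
The plan is to deduce the theorem from the two facts already established: $B$ spans $P_{p,q}(X)$ by Lemma \ref{mut product of B}, so the entire content is the \emph{linear independence} of $B$ (the dimension formula is then a bookkeeping count). I would verify independence inside the ambient free $\mathfrak{perm}$ algebra $P(X\cup\{p,q\})$ using its monomial normal form: since a $\mathfrak{perm}$ algebra is associative and satisfies $abc=bac$, in any left-normed product one may reorder all factors except the last, so the monomials $y_{k_1}\cdots y_{k_{m-1}}y_{k_m}$ with $y_{k_1}\le\cdots\le y_{k_{m-1}}$ (sorted body, arbitrary distinguished last letter) form a basis of $P(X\cup\{p,q\})$. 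The first observation is that every element of $B$ is homogeneous in its $X$-content and, upon expanding via \eqref{relations}, becomes a combination of monomials whose distinguished last letter lies in $X$; hence it suffices to prove independence separately inside each graded component $V_M$ spanned by the monomials of a fixed $X$-multiset $M$ (together with $|M|-1$ letters from $\{p,q\}$ and last letter in $X$). Inside $V_M$ a basis monomial is determined by the number $i$ of $q$'s it contains, $0\le i\le |M|-1$, and by its last letter.

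The independence argument inside $V_M$ has two steps. First, project onto the subspace of monomials containing no $q$ (that is, $i=0$). Expanding $(p-q)^{|M|-1}$, a $B_2$-element $(p-q)^{|M|-1}x_{j_{|M|}}\cdots x_{j_1}$ has nonzero $i=0$ component, namely the monomial with all-$p$ body and last letter $x_{j_1}$, whereas every $B_3$-element, carrying $i\ge 1$ factors $q$, projects to $0$; since distinct $B_2$-elements have distinct last letters $x_{j_1}$, their $i=0$ projections are distinct monomials, forcing all $B_2$-coefficients in any dependence to vanish. Second, what remains is a dependence among $B_3$-elements, which I would split by the value of $i$ (different $i$ live in different $q$-degrees and cannot interact) and, for fixed $i$, resolve by a leading-term argument. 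Order the monomials of $V_M$ by their last letter, refined to a total order by the sorted body. A $B_3$-element $p^{|M|-1-i}q^i x_{j_{|M|}}\cdots x_{j_3}[x_{j_2},x_{j_1}]$ expands into exactly two monomials, with last letters $x_{j_1}$ and $x_{j_2}$; because $j_2>j_1$, its leading term is $-\,p^{|M|-1-i}q^i x_{j_{|M|}}\cdots x_{j_3}x_{j_1}x_{j_2}$, with body $x$-part $\{j_1,j_3,\dots,j_{|M|}\}$ and last letter $x_{j_2}$. The key point is that the data $(j_1,j_2,\{j_3,\dots,j_{|M|}\})$ is recovered from this leading monomial, since the admissibility condition $j_1\le j_3\le\cdots\le j_{|M|}$ forces $j_1=\min$ of the body; hence distinct $B_3$-elements have distinct leading monomials, and triangularity yields their independence. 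The degree-one part $X$ and the degree-two part $B_1$ (whose two monomials $x_ipx_j$ and $x_jqx_i$ already involve pairwise disjoint basis monomials) are handled directly in the same spirit.

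Granting independence, $B$ is a basis, so the dimension of the multilinear part equals the number of multilinear elements of $B$. For $M=\{1,\dots,n\}$ and $n>2$: a multilinear $B_2$-element is determined by the free last index $j_1$ (the remaining indices being forced into increasing order), giving $n$ elements; a multilinear $B_3$-element is determined by a pair $(i,j_2)$ with $i\in\{1,\dots,n-1\}$, where the commutator constraint forces $j_1=1$ and $j_2\in\{2,\dots,n\}$, giving $(n-1)\cdot(n-1)$ elements; and $X$, $B_1$ contribute nothing in degree $n>2$. Summing gives $n+(n-1)^2$.

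I expect the main obstacle to be the second step of the independence proof: making the monomial ordering precise enough that the leading term of each $B_3$-element is genuinely maximal and that no lower-order term of one element can collide with the chosen maximal leading term of another, particularly in the non-multilinear case where an $X$-index may repeat. Once the injectivity $j_1=\min(\text{body})$ is used to guarantee distinct leading monomials, the triangularity is routine; the only care needed is to confirm that a non-leading monomial (last letter $x_{j_1}$) of one $B_3$-element can never equal the maximal leading monomial (last letter $x_{j_2}$, with $j_2>j_1$) of another, which follows because such a coincidence would force that element to carry a strictly larger leading monomial, contradicting maximality.
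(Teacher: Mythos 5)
Your proposal is correct and follows the same route as the paper: spanning is delegated to Lemma \ref{mut product of B}, and the remaining content is linear independence of $B$ inside the ambient free $\mathfrak{perm}$ algebra together with the count of multilinear basis elements. The paper's own proof simply declares the independence ``clear'' and omits the count, whereas you supply both — the grading by $X$-content and $q$-degree, the $i=0$ projection killing the $B_2$-coefficients, the leading-term triangularity for $B_3$ via $j_1=\min$ of the body, and the enumeration $n+(n-1)^2$ — all of which check out.
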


\begin{proof}
 It is clear that the elements of $B$ are linearly independent in $P_{p,q}(X).$ 
By Lemma $\ref{mut product of B}$ we have that the set $B$ forms a base of $P_{p,q}(X).$ 

\end{proof}

%\newpage

\section{Identities in    mutations of $\mathfrak{perm}$ algebras}

In this section, we consider the identities of mutations of  $\mathfrak{perm}$ algebras and describe all identities of degree three.

\begin{lemma}\label{lem standard identity 3}
Let $P$ be a $\mathfrak{perm}$ algebra. Then $P_{p,q}$ satisfies the identity  \begin{equation}\label{standard identity 3}
    f(x_1,x_2,x_3):=\sum_{\sigma\in \mathbb S_3}{sgn(\sigma)\langle\langle x_{\sigma(1)},x_{\sigma(2)}\rangle, x_{\sigma(3)}\rangle}=0.\end{equation}
\end{lemma}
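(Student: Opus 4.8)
The plan is to prove the identity \eqref{standard identity 3} by reducing both the left- and right-nested mutation triples to the common expansion basis provided by Proposition \ref{open brackets deg3}, and then computing the fully antisymmetrized sum directly. The key observation is that Proposition \ref{open brackets deg3} already expresses $\langle\langle x_1,x_2\rangle,x_3\rangle$ as a linear combination of the monomials $x_1x_2x_3$, $x_1[x_2,x_3]$, and $x_2[x_1,x_3]$ with coefficients $(p-q)^2$, $pq$, and $-q^2$ respectively. Since every summand in $f(x_1,x_2,x_3)$ is of the form $\langle\langle x_{\sigma(1)},x_{\sigma(2)}\rangle,x_{\sigma(3)}\rangle$, I can apply that proposition termwise, so the whole sum becomes a $\mathbb{F}[p,q]$-linear combination of permuted associative-type monomials inside the free $\mathfrak{perm}$ algebra $P(X\cup\{p,q\})$.

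First I would substitute the expansion from Proposition \ref{open brackets deg3} into each of the six terms of \eqref{standard identity 3}, keeping track of the sign $sgn(\sigma)$. This produces three groups of terms according to the coefficient $(p-q)^2$, $pq$, and $-q^2$. For the $(p-q)^2$ group, the contribution is $(p-q)^2\sum_{\sigma}sgn(\sigma)\,x_{\sigma(1)}x_{\sigma(2)}x_{\sigma(3)}$; using the left-commutative (perm) identity $abc=bac$, the monomial $x_{\sigma(1)}x_{\sigma(2)}x_{\sigma(3)}$ depends only on which generator sits in the last (rightmost) position, so the inner sum collapses pairwise: for each fixed rightmost index the two permutations of the remaining two factors have opposite signs and equal $\mathfrak{perm}$-value, hence cancel. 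Thus the $(p-q)^2$ part vanishes identically.

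Next I would treat the bracket terms. Each of the $pq$ and $-q^2$ contributions is a signed sum of monomials of the shape $x_a[x_b,x_c]$. Here I would use the metabelian/commutator rewriting recalled in the excerpt (namely $x_{j_n}\ldots x_{j_3}[x_{j_2},x_{j_1}]$ is an iterated Lie bracket, so the left multiplier and the inner antisymmetry can be exploited), together with the antisymmetry $[x_b,x_c]=-[x_c,x_b]$, to collect like monomials. The mechanical but essential step is to verify that after summing over all six permutations with their signs, the coefficient of each distinct reduced monomial $x_a[x_b,x_c]$ (with $b>c$, say) is zero. I expect the $pq$-terms and the $q^2$-terms to each cancel within their own group once the signs are accounted for, because the antisymmetrization over $\mathbb{S}_3$ forces every monomial to appear an even number of times with cancelling signs.

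The main obstacle will be the bookkeeping in this last step: one must correctly normalize all six expanded expressions into a single canonical form (choosing, for every monomial, the representative with the commutator arguments and the free multiplier in a fixed order) before the coefficients can be compared, and a sign error in any one of the six permutations would spuriously leave a nonzero residue. I would organize this by tabulating, for each $\sigma\in\mathbb{S}_3$, the three monomials it contributes with their coefficients and signs, then summing column-wise; the cancellation should be transparent once everything is in canonical form. No genuinely new idea is needed beyond Proposition \ref{open brackets deg3} and the perm identity — the content of the lemma is precisely that the degree-three mutation operation is totally antisymmetric enough that the Jacobi-like alternating sum collapses.
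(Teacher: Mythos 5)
Your proposal follows the same route as the paper: expand every summand of $f$ with Proposition \ref{open brackets deg3} and verify the cancellation inside the free $\mathfrak{perm}$ algebra using left-commutativity. Your treatment of the $(p-q)^2$ group is exactly right. The one point to fix is the predicted mechanism of cancellation in the two commutator groups: it is not true that antisymmetrization makes those monomials "appear an even number of times with cancelling signs." For the $pq$ group, pairing $\sigma$ with $\sigma\cdot(2\,3)$ flips the sign of the permutation \emph{and} the sign of the commutator $[x_{\sigma(2)},x_{\sigma(3)}]$, so the two contributions reinforce rather than cancel, and the group collapses to
\[
2pq\bigl(x_1[x_2,x_3]-x_2[x_1,x_3]+x_3[x_1,x_2]\bigr),
\]
with the $-q^2$ group giving $-2q^2$ times the same bracket combination. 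The vanishing of that combination is a genuine $\mathfrak{perm}$ fact (it is the standard polynomial $s_3$, which dies once $abc=bac$ identifies $x_2x_1x_3$ with $x_1x_2x_3$, etc.; equivalently it is the Jacobi identity for the metabelian Lie structure), so your column-wise tabulation in canonical form will still close the argument — but only after this extra reduction, not by sign-pairing. The paper sidesteps the issue by expanding all the way down to associative monomials $x_ix_jx_k$ and grouping the six terms into the three differences $\langle\langle x_i,x_j\rangle,x_k\rangle-\langle\langle x_i,x_k\rangle,x_j\rangle$, in which every $pq$- and $q^2$-term cancels immediately and only the $p^2$-terms survive to cancel cyclically; you may find that grouping cleaner than normalizing commutator monomials.
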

\begin{proof} By Proposition $\ref{open brackets deg3}$ we have  

\begin{longtable}{lcl}
$\langle\langle x_{1},x_{2}\rangle,x_{3}\rangle$&$=$&
$p^2 x_1 x_2  x_3 - p q x_1 x_2  x_3 - p q x_1 x_3  x_2 + q^2 x_2 x_3  x_1,$\\
$\langle\langle x_{1},x_{3}\rangle,x_{2}\rangle$&$=$&
$p^2 x_1 x_3  x_2 - p q x_1 x_2  x_3 - p q x_1 x_3  x_2 + q^2 x_2 x_3x_1.$\end{longtable}

Therefore, 
\begin{longtable}{rcl}
$\langle\langle x_{1},x_{2}\rangle,x_{3}\rangle-\langle\langle x_{1},x_{3}\rangle,x_{2}\rangle$&$=$&$p^2 x_1 x_2  x_3-p^2 x_1 x_3  x_2,$\\
$ \langle\langle x_{2},x_{3}\rangle,x_{1}\rangle-\langle\langle x_{2},x_{1}\rangle,x_{3}\rangle$&$=$&$ p^2 x_2 x_3  x_1-p^2 x_1 x_2  x_3,$\\
$\langle\langle x_{3},x_{1}\rangle,x_{2}\rangle-\langle\langle x_{3},x_{2}\rangle,x_{1}\rangle$&$=$&$p^2 x_1 x_3  x_2-p^2 x_2 x_3  x_1.$
\end{longtable}

Substituting them into  $f(x_1,x_2,x_3)$, we see that $f(x_1,x_2,x_3)=0$ in $P_{p,q}$.

\end{proof}

Since every associative algebra satisfies $(\ref{liadm ident})$ and by Lemma $\ref{lem standard identity 3}$ we obtain $P_{p,q}$ satisfies the identity  \begin{equation}\label{standard identity 3 right normed}
    \tilde{f}(x_1,x_2,x_3):=\sum_{\sigma\in  {\mathbb S}_3}{sgn(\sigma)\langle x_{\sigma(1)},\langle x_{\sigma(2)}, x_{\sigma(3)}\rangle\rangle}=0.
\end{equation}

Set  

$$ \mathcal{WA}(a,b,c):=\langle a,b,c\rangle +\langle b,c,a\rangle-\langle b,a,c\rangle,$$
and 
$$\overline{\mathcal{WA}}(a,b,c):=\langle a,b,c\rangle +\langle b,c,a\rangle+\langle c,a,b\rangle.$$

In \cite{bremner-Juana} it was proved that the following identities hold in mutations of associative algebras ${\rm A}_{p,q}$:

$H(x_1,x_3,x_2,x_4)\ = $
\begin{flushright}
$= \ \overline{\mathcal{WA}}(x_1,x_3,x_2)x_4-\sum\limits_{\sigma\in \mathbb  S_3}\Big(\langle\langle x_{\sigma(1)},x_{\sigma(2)}\rangle,\langle x_{\sigma(3)},x_{4}\rangle\rangle-\langle x_{\sigma(1)},\langle\langle x_{\sigma(2)},x_{\sigma(3)}\rangle, x_{4}\rangle\rangle\Big)\ = \ 0,
$\end{flushright}

$
    I(x_1,x_3,x_2,x_4) \ =$
   \begin{flushright} $=\ \mathcal{WA}(\langle x_2,x_3\rangle,x_1,x_4)+\langle x_2,x_1\bullet x_4,x_3\rangle-\langle x_2,x_4,x_3\rangle \bullet x_1-\langle x_2,x_1,x_3\rangle \bullet x_4\ =\ 0,
$\end{flushright}
where $\langle x,y,z\rangle= \langle\langle x,y \rangle,z\rangle- \langle x,\langle y ,z\rangle \rangle$ and $x\bullet y = \langle x,y\rangle+\langle y,x\rangle.$

It is clear that the identities above hold in $P_{p,q}.$ But below we show that $ \mathcal{WA}(a,b,c)=0$ and $\overline{\mathcal{WA}}(a,b,c)=0$ in $P_{p,q}.$

\begin{lemma}\label{lem weakly associative identity}
Let $P$ be a $\mathfrak{perm}$ algebra. Then $P_{p,q}$ satisfies the identity \begin{equation}\label{weakly associative identity}
    \mathcal{WA}(x_1,x_2,x_3)\ :=\ 
    \langle x_1,x_2,x_3\rangle +\langle x_2,x_3,x_1\rangle-\langle x_2,x_1,x_3\rangle\ =\ 0.
\end{equation}
\end{lemma}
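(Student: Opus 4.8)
The plan is to reduce the whole statement to the closed form of the mutation associator furnished by Proposition~\ref{open brackets deg3}. Subtracting the two expansions displayed there, almost everything cancels: the two $(p-q)^2 x_1x_2x_3$ terms cancel, the two $pq\,x_1[x_2,x_3]$ terms cancel, and the $-q^2 x_2[x_1,x_3]$ terms cancel against each other, leaving only one surviving summand. Thus I would first record
\begin{equation*}
\langle x_1,x_2,x_3\rangle \ = \ \langle\langle x_1,x_2\rangle,x_3\rangle - \langle x_1,\langle x_2,x_3\rangle\rangle \ = \ -pq\, x_2[x_1,x_3].
\end{equation*}
This compact formula for the associator is the key simplification; once it is in hand, the rest of the argument is bookkeeping.

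Next I would substitute this formula, with the indices permuted as required, into the three summands of $\mathcal{WA}$. Reading off
\begin{equation*}
\langle x_1,x_2,x_3\rangle = -pq\, x_2[x_1,x_3], \qquad
\langle x_2,x_3,x_1\rangle = -pq\, x_3[x_2,x_1], \qquad
\langle x_2,x_1,x_3\rangle = -pq\, x_1[x_2,x_3],
\end{equation*}
and collecting terms gives
\begin{equation*}
\mathcal{WA}(x_1,x_2,x_3) \ = \ -pq\Big( x_2[x_1,x_3] + x_3[x_2,x_1] - x_1[x_2,x_3] \Big),
\end{equation*}
so the claim is reduced to showing that the bracketed $\mathfrak{perm}$ expression vanishes.

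Finally, I would establish that vanishing in either of two equivalent ways. The most elementary is to expand each commutator and apply the left-commutative defining identity $abc=bac$: each term $x_j[x_i,x_k]$ becomes a difference of two associative monomials, and after normalizing the first two factors the six resulting monomials cancel in pairs. More conceptually, one invokes the identification recalled at the start of Section~2 (from \cite{FM-BS}), under which $x_j[x_i,x_k]=[x_j,[x_i,x_k]]$ in the associated metabelian Lie algebra; the bracketed expression then becomes the Jacobi combination $[x_2,[x_1,x_3]]+[x_3,[x_2,x_1]]-[x_1,[x_2,x_3]]$, which is $0$ by the Jacobi identity. I do not anticipate a real obstacle here: the only place demanding care is the sign bookkeeping in deriving the associator formula, after which the proof is a short direct computation.
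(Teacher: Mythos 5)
Your proposal is correct and follows essentially the same route as the paper: both compute the three associators from Proposition~\ref{open brackets deg3} and then cancel, the paper writing them directly as monomials ($\langle x_1,x_2,x_3\rangle=-pq\,x_1x_2x_3+pq\,x_2x_3x_1$) while you keep the compact form $-pq\,x_2[x_1,x_3]$ and finish either by expanding with left-commutativity or via the Jacobi identity in the associated metabelian Lie algebra. All of your intermediate formulas check out, so this is just a cosmetic repackaging of the paper's argument.
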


\begin{proof}  By Proposition $\ref{open brackets deg3}$ we have  
$$\begin{array}{lclcr}
\langle x_1,x_2,x_3\rangle & = & \langle \langle x_1,x_2\rangle, x_3\rangle-\langle x_1,\langle x_2,x_3\rangle\rangle & =&  -p q x_1x_2x_3  + p q x_2x_3x_1, \\
\langle x_2,x_3,x_1\rangle& =& \langle \langle x_2,x_3\rangle, x_1\rangle-\langle x_2,\langle x_3,x_1\rangle\rangle  & = & p q x_1x_3x_2  - p q x_2x_3x_1, \\ 
\langle x_2,x_1,x_3\rangle&= & \langle \langle x_2,x_1\rangle, x_3\rangle-\langle x_2,\langle x_1,x_3\rangle\rangle& = & -p q x_1x_2x_3  + p q x_1x_3x_2.\end{array}$$
Substituting them into  $\mathcal{WA}(x_1,x_2,x_3)$, we see that $\mathcal{WA}(x_1,x_2,x_3)=0$ in $P_{p,q}$.
\end{proof}

An algebra is called weakly associative if it satisfies the identity  $(\ref{weakly associative identity})$. Also, it was shown that weak associativity implies flexibility \cite[Proposition 3]{Remm}: \begin{equation}\label{flexibile-Id}
    \langle a,b,c\rangle+\langle c,b,a\rangle=0.
\end{equation}
Therefore, by \cite[Theorem 2.1]{book mutation} we have $P_{p,q}$ is a noncommutative Jordan algebra (about noncommutative Jordan algebras see \cite{aak24} and references therein). In particular, it is power-associative. Moreover, from \eqref{flexibile-Id} and $ \mathcal{WA}(a,b,c)=0$ implies $\overline{\mathcal{WA}}(a,b,c)=0$ in $P_{p,q}$
(identity $\overline{\mathcal{WA}}(a,b,c)=0$ defines dual cyclic associative algebras and it appears in \cite{aks24}).

\begin{corollary}
    Let $P$ be a $\mathfrak{perm}$ algebra. Then $P_{p,q}$ satisfy the identities \begin{longtable}{rcl}
     $\overline{H}(x_1,x_3,x_2,x_4)$&$=$&$\sum\limits_{\sigma\in \mathbb S_3}
     \Big(\langle\langle x_{\sigma(1)},x_{\sigma(2)}\rangle,\langle x_{\sigma(3)},x_{4}\rangle\rangle-\langle x_{\sigma(1)},\langle\langle x_{\sigma(2)},x_{\sigma(3)}\rangle, x_{4}\rangle\rangle\Big)\ =\ 0,$\\
     $\overline{I}(x_1,x_3,x_2,x_4)$&$=$&$\langle x_2,x_1\bullet x_4,x_3\rangle-\langle x_2,x_4,x_3\rangle \bullet x_1-\langle x_2,x_1,x_3\rangle \bullet x_4\ =\ 0.$\end{longtable}
\end{corollary}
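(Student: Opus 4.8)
The plan is to obtain both identities as immediate corollaries of the degree-four identities $H=0$ and $I=0$ of \cite{bremner-Juana}, combined with the vanishing of $\mathcal{WA}$ and $\overline{\mathcal{WA}}$ already established for $P_{p,q}$. First I would note that a $\mathfrak{perm}$ algebra is in particular associative, so $P_{p,q}$ is a mutation of an associative algebra and therefore satisfies $H(x_1,x_3,x_2,x_4)=0$ and $I(x_1,x_3,x_2,x_4)=0$.

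For $\overline{H}$, reading off the defining formula for $H$ one has
\[
H(x_1,x_3,x_2,x_4)=\overline{\mathcal{WA}}(x_1,x_3,x_2)\,x_4-\overline{H}(x_1,x_3,x_2,x_4),
\]
since the summation over $\mathbb{S}_3$ appearing in $H$ is exactly $\overline{H}$. Hence $\overline{H}=\overline{\mathcal{WA}}(x_1,x_3,x_2)\,x_4-H$. Because $\overline{\mathcal{WA}}(a,b,c)=0$ holds identically in $P_{p,q}$, the product term vanishes, and together with $H=0$ this forces $\overline{H}=0$.

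For $\overline{I}$, the defining formula for $I$ splits as
\[
I(x_1,x_3,x_2,x_4)=\mathcal{WA}(\langle x_2,x_3\rangle,x_1,x_4)+\overline{I}(x_1,x_3,x_2,x_4),
\]
so that $\overline{I}=I-\mathcal{WA}(\langle x_2,x_3\rangle,x_1,x_4)$. By Lemma \ref{lem weakly associative identity} the identity $\mathcal{WA}(a,b,c)=0$ holds in $P_{p,q}$; specializing $a=\langle x_2,x_3\rangle$, $b=x_1$, $c=x_4$ annihilates the first summand, and with $I=0$ we conclude $\overline{I}=0$.

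The main point is that there is essentially no genuine obstacle here: all the substantive computation lives in the earlier results establishing $\mathcal{WA}=0$ (Lemma \ref{lem weakly associative identity}) and its consequence $\overline{\mathcal{WA}}=0$. The only care required is purely clerical, namely verifying that the $\mathbb{S}_3$-summation range in $\overline{H}$ coincides with the one inside $H$, and correctly isolating the single $\mathcal{WA}$-summand within $I$; once these decompositions are written down, both vanishing statements follow at once by linearity.
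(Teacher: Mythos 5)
Your proposal is correct and matches the paper's (implicit) argument exactly: the paper likewise derives the corollary by noting that $H=0$ and $I=0$ hold in $P_{p,q}$ because a $\mathfrak{perm}$ algebra is associative, and then stripping off the $\overline{\mathcal{WA}}(x_1,x_3,x_2)x_4$ and $\mathcal{WA}(\langle x_2,x_3\rangle,x_1,x_4)$ summands using $\mathcal{WA}=0$ and $\overline{\mathcal{WA}}=0$. Your sign bookkeeping in both decompositions agrees with the formulas for $H$ and $I$ as stated in the paper, so nothing is missing.
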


%Therefore we have $P_{p,q}$ satisfies that flexible identity
%$$\langle x_1,x_2,x_1\rangle=0.$$ 

 %By the identity $(\ref{perm id})$ we have 
%$$x(pxq-qxp)x=0.$$
%Therefore, by \cite[Theorem 2.1]{book mutation} we have $P_{p,q}$ is third power-associative and power-associative. From \cite[Corollary 2.3]{book mutation} we conclude that $P_{p,q}$ is a flexible, noncommutative Jordan, third power-associative, and power-associative algebra.

\begin{proposition}\label{independence of the identites}
The identities $(\ref{standard identity 3})$ and $(\ref{weakly associative identity})$ are independent.
The identity $(\ref{standard identity 3 right normed})$ follows from $(\ref{standard identity 3})$ and $(\ref{weakly associative identity}).$   
\end{proposition}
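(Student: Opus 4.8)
The plan is to split the proposition into its two assertions and handle each by direct computation in the basis provided by Proposition \ref{open brackets deg3}, since every degree-three mutation element of $P_{p,q}$ admits a canonical expression as an $\mathbb{F}[p,q]$-combination of the associative-type monomials $x_i x_j x_k$ and bracket-monomials $x_i[x_j,x_k]$. The key observation is that after opening all brackets, both identities \eqref{standard identity 3} and \eqref{weakly associative identity} become statements about linear dependence among such monomials with coefficients that are polynomials in $p$ and $q$; independence of the identities is therefore a statement that neither identity's defining polynomial vector lies in the span of the other's over the relevant space of multilinear degree-three elements.

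For the independence claim, the cleanest route is to exhibit, for each identity, a concrete algebra (a mutation $P_{p,q}$ of a specific small $\mathfrak{perm}$ algebra, or more efficiently an abstract weakly-associative-but-not-standard example and vice versa) in which one identity holds and the other fails. Concretely, I would first record from the proofs of Lemma \ref{lem standard identity 3} and Lemma \ref{lem weakly associative identity} that \eqref{standard identity 3} reduces to a relation involving only the leading $p^2$-terms $x_1x_2x_3$, whereas \eqref{weakly associative identity} reduces to a relation involving only the mixed $pq$-terms. Since these two families of coefficient-monomials are carried by disjoint parts of the expansion (the pure $p^2$ associative skew-sum versus the $pq$ bracket-sum), a single generic mutation element cannot have its $\mathcal{WA}$-value forced to vanish by the standard identity alone, nor conversely. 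Making this precise means writing out $f$ and $\mathcal{WA}$ as vectors in the multilinear space whose basis is described in Theorem \ref{mut element and dim}, and checking that the two vectors are linearly independent — equivalently that specializing $(p,q)$ appropriately (for instance $q=0$, which kills all bracket terms and leaves $f\neq 0$ but $\mathcal{WA}=0$; and a choice making the $pq$-contribution survive while the skew $p^2$-sum is trivially satisfied) separates them.

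For the second assertion, that \eqref{standard identity 3 right normed} follows from \eqref{standard identity 3} and \eqref{weakly associative identity}, I would work purely formally in the free Lie-admissible-type setting using the associator notation $\langle x,y,z\rangle=\langle\langle x,y\rangle,z\rangle-\langle x,\langle y,z\rangle\rangle$ already fixed in the excerpt. The right-normed alternating sum $\tilde{f}$ and the left-normed alternating sum $f$ differ exactly by the alternating sum of associators $\sum_{\sigma}sgn(\sigma)\langle x_{\sigma(1)},x_{\sigma(2)},x_{\sigma(3)}\rangle$. The strategy is to show that this alternating sum of associators is a consequence of $\mathcal{WA}=0$: symmetrizing the weak-associativity relation $\langle a,b,c\rangle+\langle b,c,a\rangle-\langle b,a,c\rangle=0$ over $\mathbb{S}_3$ with signs, together with the flexibility \eqref{flexibile-Id} that \eqref{weakly associative identity} entails, collapses the signed sum of associators to a multiple of $f$ itself, which vanishes by \eqref{standard identity 3}. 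This is the standard mechanism by which left-normed and right-normed Lie-admissibility coincide once one has both the Lie-admissible (standard) identity and the weak-associativity/flexibility relation.

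The main obstacle I anticipate is the second assertion rather than the first: one must be careful that the rewriting of $\tilde f - f$ as a signed sum of associators uses only the two named identities and no hidden degree-three consequence, and that the flexibility identity \eqref{flexibile-Id} is legitimately available (it is, by \cite{Remm}, as a consequence of \eqref{weakly associative identity}). The bookkeeping over the six permutations, keeping track of signs while reducing the associator sum to $f$, is where a sign error would be easy to make; I would organize it by pairing each permutation with its transpose-image under the flexibility relation so that terms cancel in matched pairs, leaving a manifest multiple of $f$. The independence direction, by contrast, should be routine once the disjoint-support observation above is recorded, and is best certified by the two explicit specializations of $(p,q)$.
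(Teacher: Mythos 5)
Your treatment of the second assertion is essentially the paper's: the paper records the explicit combination $\tilde{f}(x,y,z)=\mathcal{WA}(z,y,x)-\mathcal{WA}(z,x,y)-f(z,y,x)$, which is exactly your observation that $\tilde f-f$ is the signed sum of associators and that this sum is a linear combination of two instances of $\mathcal{WA}$; the appeal to flexibility is not even needed. That part of your plan goes through.

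The independence claim is where there is a genuine gap. Both \eqref{standard identity 3} and \eqref{weakly associative identity} hold identically in \emph{every} mutation of \emph{every} $\mathfrak{perm}$ algebra --- that is precisely the content of Lemmas \ref{lem standard identity 3} and \ref{lem weakly associative identity}. Consequently, the ``vectors'' you propose to compare in the multilinear space of $P_{p,q}(X)$ are both zero, and no specialization of $(p,q)$ (in particular not $q=0$) can produce an algebra in the class where one identity holds and the other fails: $P_{p,0}$ is still a mutation of a $\mathfrak{perm}$ algebra and satisfies both. The ``disjoint support'' of the $p^2$-terms and the $pq$-terms in the expansions explains \emph{why} each identity vanishes in $P_{p,q}$, but it says nothing about whether one identity is a formal consequence of the other in an arbitrary nonassociative algebra, which is what independence means here. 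To certify independence you must exhibit an algebra \emph{outside} the class $\mathfrak{Perm}_{p,q}$ satisfying one identity but not the other; the paper does this with the three-dimensional algebra $\langle e_1,e_2\rangle=e_1$, $\langle e_2,e_1\rangle=-e_1$, $\langle e_3,e_1\rangle=e_2$, which satisfies \eqref{standard identity 3} while $\mathcal{WA}(e_1,e_1,e_3)=-e_1\neq0$. You gesture at such an ``abstract example'' in a parenthesis but never produce one, and the argument you actually develop cannot replace it. (For the converse separation, any noncommutative associative algebra, e.g.\ $2\times2$ matrices with $p=q=1$ in the ordinary product, satisfies $\mathcal{WA}=0$ trivially but not the standard identity $f=0$.)
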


\begin{proof} Let $\rm A$ be a three dimensional algebra with the multiplication table 
\begin{center}
    $\langle e_1,e_2\rangle=e_1, \  \langle e_2,e_1\rangle=-e_1,\ \langle e_3,e_1\rangle=e_2. $
\end{center} Then using Wolfram Mathematica code provided in \cite[Section 3]{KadMash2021} one can check that $\rm A$ satisfies the identity $(\ref{standard identity 3}),$ but $\mathcal{WA}(e_1,e_1,e_3)=-e_1\neq0.$ Therefore,  the identity $(\ref{standard identity 3})$ does not imply the identity $(\ref{weakly associative identity})$. Hence they are independent. For the second part of the statement we have
$$\tilde{f}(x,y,z)= \mathcal{WA}(z, y, x) -\mathcal{WA}(z, x, y) - f(z, y, x).$$
\end{proof}

\begin{theorem} \label{Th ident deg 3}
Over a field of  characteristic zero, every multilinear polynomial identity of degree $3$ satisfied by mutation of every $\mathfrak{perm}$ algebra is a consequence of the identities:
$$\begin{array}{rcl}
\langle a_1,a_2,a_3\rangle +\langle a_2,a_3,a_1\rangle -\langle a_2,a_1,a_3\rangle & = &  0, \\
 \sum\limits_{\sigma\in 
\mathbb S_3}{sgn(\sigma)\langle\langle a_{\sigma(1)},a_{\sigma(2)}\rangle,a_{\sigma(3)}\rangle} &  = & 0.
\end{array}$$
\end{theorem}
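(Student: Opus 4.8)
The plan is to compare two subspaces of the space $V$ of all multilinear nonassociative monomials of degree $3$ in the symbols $a_1,a_2,a_3$ under the single operation $\langle\cdot,\cdot\rangle$: the space $\mathcal I$ of all identities (elements of $V$ vanishing in every $P_{p,q}$), and the space $W$ of degree-$3$ multilinear consequences of the two listed identities. A ternary product admits exactly the two bracketings $\langle\langle a_i,a_j\rangle,a_k\rangle$ and $\langle a_i,\langle a_j,a_k\rangle\rangle$, and each carries $3!=6$ orderings of the variables, so $\dim V=12$. Evaluating $a_i\mapsto x_i$ maps $V$ onto the multilinear part of $P_{p,q}(X)$ for $X=\{x_1,x_2,x_3\}$, which by Theorem \ref{mut element and dim} has dimension $n+(n-1)^2=7$ at $n=3$; the map is surjective because every multilinear element of $P_{p,q}(X)$ is, by definition of the subalgebra generated by $X$, a linear combination of such evaluated monomials. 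Hence $\mathcal I=\ker$ has dimension $12-7=5$.

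Next I would compute $\dim W$. Because both identities are already multilinear of degree $3$, their degree-$3$ multilinear consequences are exactly the linear span of their $\mathbb S_3$-images obtained by relabelling variables, since no genuine substitution keeps the degree at $3$. The standard identity $f$ involves only left-normed brackets and transforms by the sign character, so its orbit spans a $1$-dimensional subspace $W_f$. The crucial observation for $\mathcal{WA}$ is that it is a sum of associators $\langle a,b,c\rangle=\langle\langle a,b\rangle,c\rangle-\langle a,\langle b,c\rangle\rangle$, so in each permuted copy the coefficient of a right-normed monomial equals the negative of the coefficient of the corresponding left-normed monomial. Thus the whole orbit of $\mathcal{WA}$ lies on the $6$-dimensional ``antidiagonal'' and is determined by its left-normed parts, reducing $\dim W_{\mathcal{WA}}$ to the rank of the six vectors obtained by permuting the left-normed part of $\mathcal{WA}$.

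The main computational step is this rank. Writing the six permuted left-normed parts in the ordered basis $(L_{123},L_{132},L_{213},L_{231},L_{312},L_{321})$, where $L_{ijk}$ denotes $\langle\langle a_i,a_j\rangle,a_k\rangle$, and solving the homogeneous system $\sum_i c_i v_i=0$, I expect a $2$-dimensional space of relations (parametrised by the coefficients attached to the images of $e$ and $(12)$), whence the rank is $4$ and $\dim W_{\mathcal{WA}}=4$. Since $f$ has vanishing right-normed part but nonzero left-normed part, it does not lie on the antidiagonal, so $f\notin W_{\mathcal{WA}}$ and $W=W_{\mathcal{WA}}\oplus W_f$ has dimension $4+1=5$.

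Finally I would assemble the pieces: by Lemmas \ref{lem standard identity 3} and \ref{lem weakly associative identity} both $\mathcal{WA}$ and $f$ are identities of every $P_{p,q}$, so $W\subseteq\mathcal I$; combined with $\dim W=5=\dim\mathcal I$ this forces $W=\mathcal I$, which is precisely the assertion of the theorem. The only genuine obstacle is the rank-$4$ computation for the orbit of $\mathcal{WA}$; everything else is bookkeeping, and the antidiagonal observation is what keeps that step to a rank computation of a $6\times 6$ matrix rather than a $12\times 12$ one.
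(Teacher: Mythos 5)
Your proposal is correct and follows essentially the same route as the paper: both arguments show that the degree-$3$ multilinear consequences of the two identities span a $5$-dimensional subspace of the $12$-dimensional space of multilinear monomials, and then invoke Theorem \ref{mut element and dim} (multilinear dimension $7$ at $n=3$) to conclude there is no room for further identities. The only difference is organizational --- you exploit the sign character of $f$ and the ``antidiagonal'' structure of $\mathcal{WA}$ to replace the paper's direct rank computation of the full $12\times 12$ coefficient matrix by a $6\times 6$ one (your rank-$4$ claim for the $\mathcal{WA}$-orbit does check out), which is a mild simplification rather than a new idea.
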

\begin{proof}  We have the following set of polynomials $(\ref{standard identity 3})$ and $(\ref {weakly associative identity})$ with all possible permutations in the variables $a,b, c:$
{\small \begin{equation} 
\Big\{\begin{array}{cccccc}\label{permut of ident deg3} 
\mathcal{WA}(a, b, c), & \mathcal{WA}(a, c, b), &\mathcal{WA}( b,a, c),&\mathcal{WA}(b, c, a),&\mathcal{WA}(c,a, b),&\mathcal{WA}(c, b, a),\\
f(a, b, c),&f(a, c, b),&f(b,a,c),&f(b,c,a),&f(c,a, b),&f(c, b, a)\}.\end{array} \Big\}\end{equation}}

There are 12 nonassociative monomials of degree three, and we present them in the following order:
\begin{equation} \begin{array}{cccccc}\label{order deg 3}
    a(bc),& a(cb),& b(ac),& b(ca),& c(ab),& c(ba),\\
    (ab)c,& (ac)b,& (ba)c,& (bc)a,& (ca)b,& (cb)a. 
\end{array}\end{equation}

We select the coefficients of the monomials in $(\ref{permut of ident deg3})$ relative to the order of $(\ref{order deg 3}).$ That is, the columns represent monomials in $ (\ref {order deg 3}), $ and the rows represent each polynomial in $(\ref{permut of ident deg3}).$ Then we have the following matrix:

$$\left(
\begin{array}{cccccccccccc}
 0 & 0 & 0 & 0 & 0 & 0 & 1 & -1 & -1 & 1 & 1 & -1 \\
 0 & 0 & 0 & 0 & 0 & 0 & -1 & 1 & 1 & -1 & -1 & 1 \\
 0 & 0 & 0 & 0 & 0 & 0 & -1 & 1 & 1 & -1 & -1 & 1 \\
 0 & 0 & 0 & 0 & 0 & 0 & 1 & -1 & -1 & 1 & 1 & -1 \\
 0 & 0 & 0 & 0 & 0 & 0 & 1 & -1 & -1 & 1 & 1 & -1 \\
 0 & 0 & 0 & 0 & 0 & 0 & -1 & 1 & 1 & -1 & -1 & 1 \\
 0 & 0 & 0 & -1 & -1 & 1 & 0 & 0 & 0 & 1 & 1 & -1 \\
 0 & 0 & -1 & 1 & 0 & -1 & 0 & 0 & 1 & -1 & 0 & 1 \\
 0 & -1 & 0 & 0 & 1 & -1 & 0 & 1 & 0 & 0 & -1 & 1 \\
 -1 & 1 & 0 & 0 & -1 & 0 & 1 & -1 & 0 & 0 & 1 & 0 \\
 -1 & 0 & 1 & -1 & 0 & 0 & 1 & 0 & -1 & 1 & 0 & 0 \\
 1 & -1 & -1 & 0 & 0 & 0 & -1 & 1 & 1 & 0 & 0 & 0 \\
\end{array}
\right).$$

The rank of the above matrix is $5.$ Hence, the multilinear dimension of the algebra defined by the identities $(\ref{standard identity 3})$ and $(\ref{weakly associative identity})$ in the third degree is equal to $7.$ Suppose there is an identity in $P_{p,q}(X) $ that is not a consequence of $(\ref{standard identity 3})$ and $ (\ref {weakly associative identity}).$ Then the dimension of $P_{p,q}(X) $ should be less than $7, $ but by the Theorem $\ref{mut element and dim}$ this is impossible. Hence, every identity of the degree three follows from $(\ref{standard identity 3})$ and $(\ref {weakly associative identity}).$

\end{proof}

Let us define the commutator product of two elements $x$ and $y$ in $P_{p,q}(X)$  by \begin{center}$x\circ y \ :=\ \langle x, y\rangle - \langle y,x \rangle \ =\ (p+q)[x,y].$ \end{center} Then it is easy to see that the following identity holds in $P_{p,q}(X)$
    $$\langle a\circ b, c\circ d\rangle\ =\ 0.$$
Therefore, $P_{p,q}(X)$ under commutator product $x\circ y$ is a metabelian Lie algebra.

In \cite{Boers},   Boers showed that there exists a Lie-admissible algebra whose mutation algebra is not Lie-admissible. In addition, he gave an example of a non-Lie-admissible algebra, but its mutation algebra is Lie-admissible. The following theorem gives a necessary and sufficient condition for algebras whose mutation algebras are Lie-admissible.

\begin{theorem}
   Let $\rm A$ be a nonassociative algebra over a field of characteristics zero. Then the mutation algebra ${\rm A}_{p,q}$  is Lie-admissible, for every nonzero $p,q\in {\rm A},$ if and only if the following identity holds in ${\rm A}:$

   \begin{equation}\label{idforLiadm}
       \sum_{\sigma\in {\mathbb S}_3}sgn(\sigma)\big((x_{\sigma(1)} y) ((x_{\sigma(2)} y) x_{\sigma(3)})-(((x_{\sigma(1)} y) x_{\sigma(2)}) y) x_{\sigma(3)}\big) =0.\end{equation}
\end{theorem}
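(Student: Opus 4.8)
The plan is to reduce Lie-admissibility of $\mathrm{A}_{p,q}$ to the vanishing of a single alternating sum of mutation-associators and then to match that sum, term by term, against the left-hand side of \eqref{idforLiadm}. Recall that an algebra with product $*$ is Lie-admissible exactly when its commutator $[a,b]_*=a*b-b*a$ obeys the Jacobi identity, and that in any nonassociative algebra the Jacobiator of the commutator coincides with the alternating sum of associators,
\[
[[a_1,a_2]_*,a_3]_*+[[a_2,a_3]_*,a_1]_*+[[a_3,a_1]_*,a_2]_*=\sum_{\sigma\in\mathbb{S}_3}sgn(\sigma)\big((a_{\sigma(1)}*a_{\sigma(2)})*a_{\sigma(3)}-a_{\sigma(1)}*(a_{\sigma(2)}*a_{\sigma(3)})\big).
\]
Applying this with $*=\langle\cdot,\cdot\rangle$, the algebra $\mathrm{A}_{p,q}$ is Lie-admissible if and only if $J_{p,q}(x_1,x_2,x_3):=\sum_{\sigma\in\mathbb{S}_3}sgn(\sigma)\langle x_{\sigma(1)},x_{\sigma(2)},x_{\sigma(3)}\rangle=0$, the bracket $\langle\cdot,\cdot,\cdot\rangle$ being the mutation-associator fixed in the notation block.

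Next I would expand $\langle x,y,z\rangle=\langle\langle x,y\rangle,z\rangle-\langle x,\langle y,z\rangle\rangle$ using only $\langle a,b\rangle=(ap)b-(bq)a$ together with the original product of $\mathrm{A}$. This yields eight monomials, which sort themselves by their $p,q$-content: two carry the factors $p,p$, two carry $q,q$, and four are mixed, carrying one $p$ and one $q$. Simply by collecting monomials, $J_{p,q}$ therefore splits as $J^{(2,0)}(p)+J^{(1,1)}(p,q)+J^{(0,2)}(q)$ according to bidegree in $(p,q)$.

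The heart of the argument is to identify the three pieces. Writing $E(y)$ for the left-hand side of \eqref{idforLiadm} viewed as a function of $y$ (with $x_1,x_2,x_3$ as its multilinear arguments), a direct comparison gives $J^{(2,0)}(p)=-E(p)$: the two $pp$-monomials of $J_{p,q}$ are precisely the two monomials of $E$ at $y=p$, negated. Reindexing the summation by the order-reversing substitution $\sigma\mapsto\sigma\cdot(1\,3)$, which flips $sgn$, identifies the $qq$-part as $J^{(0,2)}(q)=-E(q)$. Finally, reindexing each of the four mixed monomials to a common shape shows $J^{(1,1)}(p,q)=-\widehat{E}(p,q)$, where $\widehat{E}(p,q)=E(p+q)-E(p)-E(q)$; since $E$ is homogeneous of degree two in $y$, this $\widehat{E}$ is exactly the bidegree-$(1,1)$ (bilinear cross) part, so the three identities assemble into the compact relation
\[
J_{p,q}(x_1,x_2,x_3)=-\,E(p+q).
\]

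With this relation both implications are immediate. If \eqref{idforLiadm} holds, then $E(r)=0$ for every $r\in\mathrm{A}$, whence $J_{p,q}=-E(p+q)=0$ for all $p,q$ and $\mathrm{A}_{p,q}$ is Lie-admissible. Conversely, if $\mathrm{A}_{p,q}$ is Lie-admissible for all nonzero $p,q$, then $E(p+q)=0$ whenever $p,q\neq0$; as the field has characteristic zero, every $r\in\mathrm{A}$ is a sum of two nonzero elements (take $r=\tfrac12 r+\tfrac12 r$ when $r\neq0$, and $r=v+(-v)$ with $v\neq0$ when $r=0$), so $E(r)=0$ for all $r$, which is exactly \eqref{idforLiadm}. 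I expect the one genuinely delicate step to be the sign bookkeeping in the reindexings of the third paragraph—in particular, matching the four mixed monomials against the polarization $\widehat{E}$; the surrounding expansion is routine if lengthy, and the clean final relation $J_{p,q}=-E(p+q)$ serves as a useful consistency check on the signs.
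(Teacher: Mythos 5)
Your argument is correct and follows essentially the same route as the paper: both proofs expand the Jacobiator of the mutation commutator $x\circ y=\langle x,y\rangle-\langle y,x\rangle$ and sort the resulting monomials by bidegree in $(p,q)$, recognizing the pure $p$- and $q$-components as $-E(p)$ and $-E(q)$ and the mixed component as (minus) the linearization of $E$ in $y$ — which is exactly how the paper invokes ``the identity \eqref{idforLiadm} and its full linearization'' for one direction and the homogeneity of identities for the other. Your closed form $J_{p,q}=-E(p+q)$ (equivalently, the observation that $x\circ y=(x(p+q))y-(y(p+q))x$ depends only on $p+q$) is a tidier packaging than the paper's term-by-term expansion, and your converse via writing any $r$ as a sum of two nonzero elements is a harmless substitute for the paper's appeal to multihomogeneous components, but neither changes the substance of the proof.
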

\begin{proof}
Let ${\rm A}$  be a nonassociative algebra with identity \eqref{idforLiadm}. Then by  the definitions of $x\circ y :=\langle x, y\rangle - \langle y,x \rangle$ and the mutation product $\langle x, y\rangle=(x p)y-(yq)x,$ we have

$$(a\circ b)\circ c+(b\circ c)\circ a+(c\circ a)\circ b=$$
$$-((a p) ((b p) c))  + ((a p) ((c p) b))
+((b p) ((a p) c))  - ((b p) ((c p) a))
-((c p) ((a p) b))  + ((c p) ((b p) a))
$$$$+((((a p) b) p) c)  - ((((a p) c) p) b)
-((((b p) a) p) c)  + ((((b p) c) p) a)
+((((c p) a) p) b)  - ((((c p) b) p) a)$$
$$
- ((a q) ((b q) c))  + ((a q) ((c q) b))
+ ((b q) ((a q) c))  - ((b q) ((c q) a))   
- ((c q) ((a q) b))  + ((c q) ((b q) a))  $$$$ 
+ ((((a q) b) q) c)  - ((((a q) c) q) b)   
- ((((b q) a) q) c)  + ((((b q) c) q) a)   
+ ((((c q) a) q) b)  - ((((c q) b) q) a)$$$$
- ((a p) ((b q) c))+ ((a p) ((c q) b))-((a q) ((b p) c)) + ((a q) ((c p) b))
+ ((b p) ((a q) c))- ((b p) ((c q) a))$$$$-((b q) ((c p) a)) - ((c p) ((a q) b))
+ ((b q) ((a p) c))+ ((c p) ((b q) a))-((c q) ((a p) b)) + ((c q) ((b p) a))$$$$
+ ((((a p) b) q) c)- ((((a p) c) q) b)+((((a q) b) p) c) - ((((b p) a) q) c)
- ((((a q) c) p) b)+ ((((b p) c) q) a)$$$$-((((b q) a) p) c) + ((((b q) c) p) a)
+ ((((c p) a) q) b)- ((((c p) b) q) a)+((((c q) a) p) b) - ((((c q) b) p) a).$$
Hence, by the identity \eqref{idforLiadm} and by its full linearization we obtain
$$(a\circ b)\circ c+(b\circ c)\circ a+(c\circ a)\circ b=0.$$ The converse follows from the homogeneous property of identities. 
\end{proof}

An algebra with identities
\begin{center}
    $(ab)c=(ac)b$ \  and \  $a(bc)=b(ac)$  
\end{center}
is called a bicommutative algebra.  

\begin{corollary}
    Let $\rm B$ be a bicommutative algebra. Then ${\rm B}_{p,q}$ is a Lie-admissible algebra.
\end{corollary}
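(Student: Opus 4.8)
The plan is to invoke the preceding theorem, which reduces the corollary to a single verification: it suffices to check that every bicommutative algebra $\mathrm B$ satisfies the degree-four identity \eqref{idforLiadm}. Once this is established, the theorem immediately yields that $\mathrm{B}_{p,q}$ is Lie-admissible for every nonzero $p,q\in\mathrm B$.

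The device I would use is the operator reformulation of bicommutativity. Writing $L_a(x)=ax$ and $R_a(x)=xa$, the identity $a(bc)=b(ac)$ says precisely that left multiplications commute, $L_aL_b=L_bL_a$, while $(ab)c=(ac)b$ says that right multiplications commute, $R_aR_b=R_bR_a$. With these two commutation relations in hand, I would treat the two families of summands appearing in \eqref{idforLiadm} separately and show that the alternating sum of each family vanishes on its own.

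For the first summand $(x_{\sigma(1)}y)\big((x_{\sigma(2)}y)x_{\sigma(3)}\big)$, I would apply left commutativity in the form $a(bc)=b(ac)$ with $a=x_{\sigma(1)}y$, $b=x_{\sigma(2)}y$ and $c=x_{\sigma(3)}$, obtaining $(x_{\sigma(2)}y)\big((x_{\sigma(1)}y)x_{\sigma(3)}\big)$; hence this term is invariant under the transposition of the indices sitting in positions $1$ and $2$. For the second summand $\big(((x_{\sigma(1)}y)x_{\sigma(2)})y\big)x_{\sigma(3)}$, I would use right commutativity twice: first to pull the inner factor $y$ past $x_{\sigma(2)}$, rewriting the term as $\big(((x_{\sigma(1)}y)y)x_{\sigma(2)}\big)x_{\sigma(3)}$, and then to swap the two outermost right factors $x_{\sigma(2)}$ and $x_{\sigma(3)}$, which shows this term is invariant under the transposition of the indices in positions $2$ and $3$.

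Each of these transpositions is odd, so in the alternating sum over $\mathbb S_3$ the permutations pair off with opposite signs but equal summands and cancel; thus both halves of \eqref{idforLiadm} vanish independently and the identity holds in $\mathrm B$. I do not expect a genuine obstacle: the argument is essentially a bookkeeping exercise in re-bracketing, and the only point demanding care is matching each of the two bicommutative relations to the correct summand and confirming that the resulting symmetry is exactly a sign-$(-1)$ transposition. The corollary then follows at once from the theorem.
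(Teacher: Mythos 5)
Your proof is correct and takes the route the paper intends: the corollary is stated without proof as an immediate application of the preceding theorem, and the required check is precisely your observation that identity \eqref{idforLiadm} holds in any bicommutative algebra. Your pairing argument is sound --- left commutativity makes the first family of summands invariant under the odd transposition of positions $1$ and $2$, and two applications of right commutativity make the second family invariant under the odd transposition of positions $2$ and $3$, so each alternating sum vanishes separately and the theorem applies.
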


\section{Homomorphic image of mutations of $\mathfrak{perm}$ algebras}

In this section, we show that there exists an exceptional homomorphic image of $P_{p,q}(X).$ 
We use a general method that includes non-special homomorphic images of special Jordan algebras to construct counterexamples, illustrated by P.M. Cohn in \cite{Cohn}. 
The concept of an associative algebra and the special Jordan algebras embedded within it can be generalised to the case of an arbitrary algebra $\rm A$ and its subsets, which are closed under certain combinations of the operators of $\rm A.$

\begin{lemma}[Cohn, \cite{Cohn}]\label{crit hom image}
    A homomorphic image $A_{p,q}(X)/I$ of the free special mutation algebra ${\rm A}_{p,q}(X) $ is special if and only if $ \tilde{I} \cap {\rm A}_{p,q}(X) = I,$ where $I$ is an ideal of  ${\rm A}_{p,q}(X)$ and  $\tilde{I}$ is the ideal of the free algebra  ${\rm A}(X\cup \{p,q\})$ generated by $I.$
\end{lemma}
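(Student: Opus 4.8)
The plan is to prove the two implications separately, translating in each case between the mutation product carried by ${\rm A}_{p,q}(X)$ and the $\mathfrak{perm}$ product of the free $\mathfrak{perm}$ algebra ${\rm A}(X\cup\{p,q\})$. The single observation that drives the whole argument is this: if $\psi\colon {\rm A}(X\cup\{p,q\})\to P$ is a homomorphism of $\mathfrak{perm}$ algebras into a $\mathfrak{perm}$ algebra $P$ with distinguished elements $r,s\in P$ such that $\psi(p)=r$ and $\psi(q)=s$, then $\psi$ is automatically a homomorphism of the associated mutation algebras ${\rm A}(X\cup\{p,q\})_{p,q}\to P_{r,s}$. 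Indeed, since $\langle a,b\rangle=(ap)b-(bq)a$ is built solely from the $\mathfrak{perm}$ product together with $p$ and $q$, one computes $\psi(\langle a,b\rangle)=(\psi(a)r)\psi(b)-(\psi(b)s)\psi(a)=\langle\psi(a),\psi(b)\rangle$. In particular such a $\psi$ restricts to a mutation homomorphism on the subalgebra ${\rm A}_{p,q}(X)$, and two mutation homomorphisms out of ${\rm A}_{p,q}(X)$ that agree on the generating set $X$ must coincide, because $X$ generates ${\rm A}_{p,q}(X)$ under $\langle\cdot,\cdot\rangle$.

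For the ``only if'' direction, suppose ${\rm A}_{p,q}(X)/I$ is special, so it embeds into $P_{r,s}$ for some $\mathfrak{perm}$ algebra $P$ and some $r,s\in P$. Composing the quotient map ${\rm A}_{p,q}(X)\twoheadrightarrow {\rm A}_{p,q}(X)/I$ with this embedding gives a mutation homomorphism $\varphi\colon {\rm A}_{p,q}(X)\to P_{r,s}$ with $\ker\varphi=I$. By the freeness of ${\rm A}(X\cup\{p,q\})$ as a $\mathfrak{perm}$ algebra I extend the assignment $x\mapsto\varphi(x)$ for $x\in X$, $p\mapsto r$, $q\mapsto s$ to a $\mathfrak{perm}$ homomorphism $\psi\colon {\rm A}(X\cup\{p,q\})\to P$. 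By the observation above $\psi$ is a mutation homomorphism, and since it agrees with $\varphi$ on $X$ we have $\psi|_{{\rm A}_{p,q}(X)}=\varphi$. Hence $I=\ker\varphi\subseteq\ker\psi$; as $\ker\psi$ is an ideal of ${\rm A}(X\cup\{p,q\})$ containing $I$, it contains $\tilde I$, and therefore $\tilde I\cap {\rm A}_{p,q}(X)\subseteq\ker\psi\cap {\rm A}_{p,q}(X)=\ker\varphi=I$. The reverse inclusion $I\subseteq\tilde I\cap {\rm A}_{p,q}(X)$ is immediate, so $\tilde I\cap {\rm A}_{p,q}(X)=I$.

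For the ``if'' direction, assume $\tilde I\cap {\rm A}_{p,q}(X)=I$ and set $P:={\rm A}(X\cup\{p,q\})/\tilde I$, a $\mathfrak{perm}$ algebra with distinguished elements $\bar p,\bar q$ the images of $p,q$. The canonical projection $\pi\colon {\rm A}(X\cup\{p,q\})\to P$ is a $\mathfrak{perm}$ homomorphism sending $p$ and $q$ to $\bar p$ and $\bar q$, hence by the observation a mutation homomorphism onto $P_{\bar p,\bar q}$; its restriction $\bar\pi$ to ${\rm A}_{p,q}(X)$ has kernel $\ker\pi\cap {\rm A}_{p,q}(X)=\tilde I\cap {\rm A}_{p,q}(X)=I$. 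Thus $\bar\pi$ induces an embedding ${\rm A}_{p,q}(X)/I\hookrightarrow P_{\bar p,\bar q}$, which exhibits ${\rm A}_{p,q}(X)/I$ as a subalgebra of the mutation of a $\mathfrak{perm}$ algebra, that is, as a special algebra.

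The proof is essentially a diagram chase, so the only genuine delicacy is conceptual rather than computational: one must keep the two products straight (the $\mathfrak{perm}$ product on ${\rm A}(X\cup\{p,q\})$ versus the mutation product on ${\rm A}_{p,q}(X)$) and check that homomorphisms pass correctly between the two structures. The crux that makes everything go through is the opening observation --- that a $\mathfrak{perm}$ homomorphism sending $p,q$ to the distinguished elements of the target is automatically compatible with $\langle\cdot,\cdot\rangle$ --- combined with the universal property of the free $\mathfrak{perm}$ algebra on $X\cup\{p,q\}$. With these in hand both inclusions follow formally, and no identity-specific calculation is needed.
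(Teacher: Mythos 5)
Your proof is correct and is exactly the standard Cohn argument that the paper itself invokes without writing out (it only remarks that ``the proof follows the same reasoning presented in \cite{Cohn}''): the ``if'' direction via the quotient $ {\rm A}(X\cup\{p,q\})/\tilde I$ and the ``only if'' direction via the universal property of the free $\mathfrak{perm}$ algebra, both hinging on the observation that a $\mathfrak{perm}$ homomorphism sending $p,q$ to the distinguished elements automatically respects $\langle\cdot,\cdot\rangle$. No gaps; this matches the intended proof.
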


   The proof follows the same reasoning presented in  \cite{Cohn} or see \cite[Ch. 3]{ZSSS}.

The above lemma is crucial in proving the existence of exceptional homomorphic images. This method has also been used to obtain exceptional homomorphic images of various classes of algebras; for example, see \cite{dim19, FM-BS}.

Now, we prove the following theorem:

\begin{theorem}\label{hom image} There exists a homomorphic image of $P_{p,q}(\{x_1,x_2,x_3,x_4\})$ which is  exceptional.
\end{theorem}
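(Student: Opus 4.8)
The plan is to apply Cohn's criterion (Lemma \ref{crit hom image}) directly: I would exhibit an explicit ideal $I$ of the free special mutation algebra $P_{p,q}(\{x_1,x_2,x_3,x_4\})$ such that the quotient fails to be special, i.e. such that $\tilde{I} \cap P_{p,q}(\{x_1,x_2,x_3,x_4\}) \neq I$. Concretely, I would find an element $w \in P_{p,q}(X)$ that does \emph{not} lie in $I$ as an element of the mutation algebra, yet which \emph{does} lie in the ideal $\tilde{I}$ generated by $I$ inside the ambient free $\mathfrak{perm}$ algebra $P(X \cup \{p,q\})$. The existence of such a $w$ forces, by Lemma \ref{crit hom image}, the quotient $P_{p,q}(X)/I$ to be exceptional.

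First I would use the explicit basis $B = X \cup B_1 \cup B_2 \cup B_3$ from Theorem \ref{mut element and dim} as the working coordinate system for $P_{p,q}(X)$: every mutation element has a canonical normal form in terms of $B$, and membership/non-membership in a given subspace can be checked by reading off coordinates. The idea is to choose $I$ as the ideal generated by one or two carefully selected basis-type relations in low degree (degree $2$ or $3$), so that $I$ is easy to describe explicitly but the ambient ideal $\tilde{I}$ ``leaks'' an extra mutation element. The natural candidates for the leaked element $w$ come from the relations \eqref{relations}: operations that are trivial or identically zero at the mutation level (such as products of the form $\langle [a,b],[c,d]\rangle = 0$, which appeared in the proof of Lemma \ref{mut product of B}) but whose underlying associative/$\mathfrak{perm}$ expansions involve genuine $p,q$-weighted monomials. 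In the ambient free $\mathfrak{perm}$ algebra one can multiply generators of $I$ on the left and right by $p$ and $q$ and by other generators, producing $\mathfrak{perm}$-monomials that reassemble into a nonzero mutation element outside $I$; this is exactly the mechanism Cohn's method exploits.

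The concrete computation would proceed as follows. I would pick a generator $g \in I$ (a relation forced to hold in the quotient) and form a specific product $r_1 g r_2$ in $P(X \cup \{p,q\})$ with $r_1, r_2$ chosen words in the generators and in $p,q$. Using the $\mathfrak{perm}$ left-commutativity together with the identities \eqref{relations} and Proposition \ref{open brackets deg3}, I would expand $r_1 g r_2$ into the $\mathfrak{perm}$ normal form and verify that the result, \emph{as an element of $P(X\cup\{p,q\})$}, equals the $\mathfrak{perm}$-expansion of some mutation element $w \in \mathrm{span}(B)$. The four generators $x_1,x_2,x_3,x_4$ are presumably needed precisely because the relevant bracket-of-bracket element $x_{j_4}\ldots[x_{j_2},x_{j_1}]$-type term, or a product $\langle a\circ b, c\circ d\rangle$, requires four distinct slots to realize the obstruction nontrivially. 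I would then confirm $w \notin I$ by writing both $w$ and the $I$-relations in $B$-coordinates and checking linear independence.

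The main obstacle is the bookkeeping in the two different ambient algebras at once. Non-membership $w \notin I$ is a statement in the mutation algebra $P_{p,q}(X)$, where $I$ is generated under the mutation product $\langle\cdot,\cdot\rangle$, whereas membership $w \in \tilde{I}$ is a statement in the associative-type free $\mathfrak{perm}$ algebra $P(X\cup\{p,q\})$, where $\tilde{I}$ is generated under ordinary $\mathfrak{perm}$ multiplication and one is free to insert $p$ and $q$ anywhere. The delicate part is to engineer the generator(s) of $I$ so that the \emph{mutation}-ideal they generate is small enough to omit $w$ (this requires controlling all possible nested mutation products of $g$, which Lemmas \ref{mutation element1 lemma} and \ref{mutation element2 lemma} and the expansion rules make tractable), while the \emph{$\mathfrak{perm}$}-ideal $\tilde{I}$ is large enough to capture $w$. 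Verifying the first containment rigorously — that no sequence of mutation operations on the generators of $I$ can produce $w$ — is where the real work lies; the basis $B$ is the essential tool, since it reduces this to a finite-dimensional linear-algebra check in each multidegree, entirely analogous to the rank computation carried out in the proof of Theorem \ref{Th ident deg 3}.
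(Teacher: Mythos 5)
Your overall strategy is the same as the paper's: invoke Cohn's criterion (Lemma \ref{crit hom image}) and exhibit an ideal $I$ together with a witness $w\in\tilde I\cap P_{p,q}(X)$ with $w\notin I$. But the proposal stops at the level of a plan: it never names the generators of $I$ nor the element $w$, and the entire content of the theorem is the existence of that concrete pair. Worse, the candidates you do suggest for $w$ --- expressions such as $\langle [a,b],[c,d]\rangle$ or $\langle a\circ b,\, c\circ d\rangle$, which vanish identically in every mutation of a $\mathfrak{perm}$ algebra --- cannot work: a witness must be a \emph{nonzero} element of the free mutation algebra that lies in $\tilde I$ but not in $I$, whereas an identity of the variety is already zero in $P_{p,q}(X)$ and so trivially lies in every ideal. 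The mechanism you need is not an identity that dies at the mutation level, but a relation expressing a genuine mutation element as a $\mathfrak{perm}$-module combination (left multiples by $p$, $q$ and generators) of other mutation elements.

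That relation is \eqref{hom ident1}: $\langle b,\langle a,c\rangle\rangle = a\,p\,\langle b,c\rangle - c\,q\,\langle b,a\rangle$. The paper takes $I$ generated by $f_1=\langle\langle x_2,x_3\rangle,x_4\rangle$ and $f_2=\langle\langle x_2,x_3\rangle,x_1\rangle$; substituting $b=\langle x_2,x_3\rangle$, $a=x_1$, $c=x_4$ shows at once that $w=\langle\langle x_2,x_3\rangle,\langle x_1,x_4\rangle\rangle = x_1 p f_1 - x_4 q f_2\in\tilde I\cap P_{p,q}(X)$. Non-membership $w\notin I$ is then a finite check exactly of the kind you anticipate: the multilinear degree-four part of $I$ is spanned by the four elements $\langle f_1,x_1\rangle$, $\langle x_1,f_1\rangle$, $\langle f_2,x_4\rangle$, $\langle x_4,f_2\rangle$, and expanding everything into the $\mathfrak{perm}$ basis yields an inconsistent $12\times 4$ linear system. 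So your closing paragraph correctly locates where the work lies (the linear-algebra verification in a fixed multidegree), but without the specific choice of $I$ and the use of \eqref{hom ident1} to manufacture $w$, the argument does not yet establish existence.
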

\begin{proof}
Let $I$ be an ideal of $P_{p,q}(\{x_1,x_2,x_3,x_4\})$ mutation of $\mathfrak{perm}$ algebra $P(\{x_1,x_2,x_3,x_4,p,q\})$ generated by  $f_1=\langle\langle x_2,x_3\rangle,x_4 \rangle$ and $f_2=\langle\langle x_2,x_3\rangle, x_1 \rangle.$
Define $\tilde{I}$ ideal of $P(\{x_1,x_2,x_3,x_4,p,q\})$ generated by the set $I.$ Set $a=x_1,$ $b=\langle x_2,x_3\rangle,$ and  $c=x_4$ in $(\ref{hom ident1}),$ and we have 
\begin{equation}\label{hom ident2}
    \langle\langle x_2,x_3\rangle,\langle x_1,x_4 \rangle \rangle - x_1  p  \langle\langle x_2,x_3\rangle,x_4 \rangle+ x_4  q \langle\langle x_2,x_3\rangle, x_1 \rangle =0.
\end{equation}

It is clear that from $(\ref{hom ident2})$ follows $ \langle\langle x_2,x_3\rangle,\langle x_1,x_4 \rangle \rangle\in \tilde{I}\cap P_{p,q}(\{x_1,x_2,x_3,x_4\}).$

Let us to show that $ \langle\langle x_2,x_3\rangle,\langle x_1,x_4 \rangle \rangle\notin I,$ i.e. there are no $\lambda_1,\lambda_2,\lambda_3,\lambda_4$ such that $$\begin{array}{lcl} 
\langle\langle x_2,x_3\rangle,\langle x_1,x_4 \rangle \rangle & = & \lambda_1\langle \langle\langle x_2,x_3\rangle,x_4 \rangle,x_1\rangle+\lambda_2\langle x_1,\langle\langle x_2,x_3\rangle,x_4 \rangle\rangle +\\
&& \lambda_3\langle \langle\langle x_2,x_3\rangle, x_1 \rangle,x_4\rangle+ \lambda_4\langle x_4, \langle\langle x_2,x_3\rangle, x_1 \rangle\rangle. 
\end{array}$$
\bigskip 

We expand the mutation brackets $\langle x,y\rangle=xpy-yqx$ into the free $\mathfrak{perm}$ algebra $P(\{x_1,x_2,x_3,x_4,p,q\})$ and collect the coefficients of base elements of $P(\{x_1,x_2,x_3,x_4,p,q\}).$ 
Then we have $12$ linear equations with $4$ unknowns:

\begin{longtable}{rclrcl}
$\lambda _2+\lambda _3$&$=$&$1,$ &
     $\lambda _1+\lambda _2+2 \lambda _3+\lambda _4$&$=$&$1,$\\
    
     $\lambda _4$&$=$&$0,$ & $\lambda _1+\lambda _3+2 \lambda _4$&$=$&$0,$  \\    
     $\lambda _2+\lambda _4$&$=$&$1,$ &
     $\lambda _1+\lambda _3$&$=$&$1,$\\ 
     $\lambda _2+\lambda _4$&$=$&$1,$ &
     $\lambda _1+\lambda _3$&$=$&$1,$ \\
     $\lambda _1+\lambda _4$&$=$&$0, $ &
     $
     2 \lambda _1+\lambda _2+\lambda _3+\lambda _4$&$=$&$1,$ \\ 
     $\lambda _1+2 \lambda _2+\lambda _3$&$=$&$1,$ &
     $\lambda _2$&$=$&$0$

\end{longtable}

The above system of linear equations does not have a solution. Therefore, $ \langle\langle x_2,x_3\rangle,\langle x_1,x_4 \rangle \rangle\notin I.$ By Cohn's criterion Lemma \ref{crit hom image} the homomorphic image  $P_{p,q}(\{x_1,x_2,x_3,x_4\})/I$ is exceptional.
\end{proof}

\section{Open question}

The method for finding identities using computer algebra is described in detail in \cite{bremner-Juana}. Using this method, one can also show that, in addition to identities 
\begin{center}
    $f(a,b,c)=0,$\ 
    $\mathcal{WA}(a,b,c)=0,$  \  
    $\overline{H}(a,b,c,d)=0,$  \ 
    $\overline{I}(a,b,c,d)=0,$
\end{center}
there are two more new identities at degree four. It is noteworthy that, although in the case of associative algebras, a new identity appears at each degree up to the $7$th degree, but in the case of $\mathfrak{perm}$ algebras, all identities follow from these six identities up to degree $7$. This observation leads the following:

\begin{conjecture}
    
 Every multilinear identity in the mutation of a \(\mathfrak{perm}\) algebra over a field of characteristic zero is a consequence of the following identities:
\[
f(a,b,c) = 0, \quad \mathcal{WA}(a,b,c) = 0, \quad \overline{H}(a,b,c,d) = 0, \quad \overline{I}(a,b,c,d) = 0,
\]
and two additional identities in degree four:
  \relax 

{\small  
\begin{longtable}{rcl}
 
$\langle \langle \langle a,b \rangle,  c \rangle,  d \rangle  +\langle \langle \langle c,d \rangle,  a \rangle,  b \rangle$&$=$&$\langle \langle \langle a,d \rangle,  c \rangle,  b \rangle  +\langle \langle \langle c,b \rangle,  a \rangle,  d \rangle ,$  \\

 $\langle \langle a,b \rangle,  \langle d,c \rangle   \rangle  
 +\langle \langle c,\langle b,a \rangle   \rangle,  d \rangle  
 +\langle \langle \langle b,a \rangle,  c \rangle,  d \rangle  
 $&$=$&$
 \langle \langle \langle a,b \rangle,  d \rangle,  c \rangle  
 +\langle \langle \langle b,c \rangle,  a \rangle,  d \rangle  
 +\langle \langle \langle c,a \rangle,  b \rangle,  d \rangle.$
   \end{longtable}}
\end{conjecture}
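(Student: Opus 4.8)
The plan is to recast the conjecture as an equality of multilinear dimensions that must hold in \emph{every} degree. Write $\Sigma$ for the six listed identities and let $\mathfrak{M}$ be the variety they define. Since $P_{p,q}$ satisfies every member of $\Sigma$ (this is Lemma \ref{lem standard identity 3}, Lemma \ref{lem weakly associative identity}, the corollary establishing $\overline{H}=0$ and $\overline{I}=0$, together with a direct check of the two new degree-four identities via Proposition \ref{open brackets deg3} and the relations \eqref{relations}), the variety generated by all special mutations of $\mathfrak{perm}$ algebras is contained in $\mathfrak{M}$. Because the ground field has characteristic zero, the conjecture is equivalent to the assertion that for every $n$ the multilinear component $R_n$ of the relatively free algebra of $\mathfrak{M}$ on $n$ generators has dimension $n+(n-1)^2$. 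The substitution surjection from $R_n$ onto the multilinear part of $P_{p,q}(X)$ already yields $\dim R_n \ge n+(n-1)^2$ by Theorem \ref{mut element and dim}; hence the entire content lies in the reverse inequality $\dim R_n \le n+(n-1)^2$, i.e.\ in showing that the six identities suffice to rewrite every multilinear mutation monomial as a combination of a spanning set of that size.

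First I would produce the candidate spanning set by mirroring the basis $B$. For each choice of a rightmost letter $x_{j_1}$ I take the left-normed word $\langle\langle\cdots\langle x_{j_1},x_{k_1}\rangle,\dots\rangle,x_{k_{n-1}}\rangle$ with $k_1<\cdots<k_{n-1}$; these $n$ words realize the $B_2$-part. For each $i\in\{1,\dots,n-1\}$ together with a choice of commutator partner I take a fixed bracketing realizing the element $p^{n-1-i}q^i x_{j_n}\cdots[x_{j_2},x_{j_1}]$ of $B_3$; these $(n-1)^2$ words realize the $B_3$-part. Since the images of all these words under the realization map into $P_{p,q}(X)$ are exactly the basis vectors of Theorem \ref{mut element and dim}, the $n+(n-1)^2$ candidate words are automatically linearly independent in $R_n$. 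Thus linear independence comes for free, and only \emph{spanning} remains.

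The reduction to this set proceeds in two stages, both by induction on degree, generalizing the degree-three matrix computation behind Theorem \ref{Th ident deg 3}. In the first stage I would use weak associativity \eqref{weakly associative identity}, flexibility \eqref{flexibile-Id}, and the identity $\overline{H}$ to straighten an arbitrary multilinear bracketing into left-normed form: $\mathcal{WA}$ and flexibility push right-nested associators to the left inside a single block, while $\overline{H}$ resolves a product $\langle u,v\rangle$ in which both $u$ and $v$ have degree at least two, reducing it to left-normed words of the same degree plus strictly shorter-block corrections handled by the inductive hypothesis. In the second stage I would collapse the $n!$ left-normed words onto the $n+(n-1)^2$ normal forms using the standard identity $f$ and the two degree-four identities: writing a left-normed word as $\langle L,x_k\rangle$ with $L$ of degree $n-1$ already in normal form, right multiplication by $x_k$ produces configurations that $f$ (through its multilinear linearizations in each degree) symmetrizes, and the two new degree-four relations are expected to be precisely what is needed to eliminate the residual ambiguities that the degree-three identities cannot reach.

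The hard part is the upper bound \emph{uniformly} in $n$: proving that the rewriting system furnished by $\Sigma$ is complete, i.e.\ that no genuinely new multilinear identity is forced in any degree $\ge 5$. Equivalently, one must show that the reduction above always terminates in the $(n+(n-1)^2)$-element spanning set with no further collapse, which amounts to exhibiting a confluent, Gr\"obner--Shirshov-type rewriting system whose normal forms are in bijection with that set and checking that all overlaps reduce to zero. The representation-theoretic method of \cite{bremner-Juana}, computing the $\mathbb{S}_n$-module structure of $R_n$ degree by degree, confirms the count through degree seven; the remaining task---the one that keeps the statement a conjecture---is to promote this finite verification to an all-degrees argument, either by closing the induction on the two degree-four relations or by establishing confluence of the overlaps of $\Sigma$ directly.
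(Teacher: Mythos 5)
You have not produced a proof, and neither does the paper: the statement you were given is posed there explicitly as an open conjecture, supported only by computer-algebra verification (the method of \cite{bremner-Juana}) that no further new identities appear up to degree seven, with the remark following the conjecture stating that Theorem \ref{mut element and dim} supplies a \emph{lower} bound and that finding a basis of the relatively free algebra remains to be done. Your reformulation is sound and matches that framing: in characteristic zero the conjecture is equivalent to $\dim R_n = n+(n-1)^2$ for all $n\ge 3$, the lower bound coming from the surjection onto the multilinear part of $P_{p,q}(X)$, so everything hinges on the upper bound. Your independence argument also works, with the small correction that the lifts of the $B_2$ and $B_3$ basis vectors are in general linear combinations of mutation monomials rather than single left-normed words (by Proposition \ref{open brackets deg3}, a left-normed word maps to a $B_2$ element plus $B_3$ corrections, so the system is triangular and independence still follows); and note the count $n+(n-1)^2$ only holds for $n>2$, so degrees one and two need the trivial direct check.

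The genuine gap is the spanning step, and your own text concedes it. The assertion that $\overline{H}$ ``resolves'' an arbitrary product $\langle u,v\rangle$ with both factors of degree at least two is unsubstantiated: $\overline{H}$ is one alternating sum in four variables, and nothing you write shows that its consequences (together with $\mathcal{WA}$, flexibility \eqref{flexibile-Id}, $f$, $\overline{I}$, and the two degree-four relations) generate enough relations in degrees five and higher to straighten every bracketing; likewise ``are expected to be precisely what is needed'' is an expectation, not an argument. Promoting the finite verification through degree seven to all degrees --- by closing your induction, by checking that all compositions/overlaps of a Gr\"obner--Shirshov presentation reduce to zero, or by computing the $\mathbb{S}_n$-module structure of $R_n$ in every degree --- is exactly the open content of the conjecture. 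So what you have is a correct and useful reduction of the conjecture to a confluence statement, consistent with the paper's discussion, but not a proof of it.
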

Theorem \ref{mut element and dim} provides a lower bound for the dimension of the algebra defined by the above identities. The remaining task is to find a basis for the free algebra defined by the identities in the aforementioned conjecture.

\end{document}